\title[ ]{Growth of the eigensolutions of Laplacians  on   Riemannian manifolds I: construction of energy function}
\author{Wencai Liu}
\address[Wencai Liu]{Department of Mathematics, University of California, Irvine, California 92697-3875, USA}\email{liuwencai1226@gmail.com}
\theoremstyle{plain}
\newtheorem{theorem}{Theorem}[section]
\newtheorem{corollary}[theorem]{Corollary}
\newtheorem{lemma}[theorem]{Lemma}
\theoremstyle{definition}
\newtheorem{remark}[theorem]{Remark}
\begin{document}


\begin{abstract}
In this paper, we consider the   eigen-solutions of  $-\Delta u+ Vu=\lambda u$,
where   $\Delta$ is  the Laplacian  on  a    non-compact complete   Riemannian manifold.
We develop Kato's methods on manifold  and
establish the growth of the eigen-solutions   as $r$ goes to infinity based on  the  asymptotical   behaviors   of $\Delta r$ and $V(x)$, where   $r=r(x)$ is  the distance function on the manifold.
As   applications,  we   prove  several criteria of absence of eigenvalues of Laplacian,
including a new proof of the  absence of    eigenvalues  embedded into the essential spectra of free Laplacian if the radial curvature of the manifold satisfies  $ K_{\rm rad}(r)= -1+\frac{o(1)}{r}$.
\end{abstract}
\maketitle
\section{Introduction and main results}
 Let $(M,g)$ be a   connected  $n$-dimensional noncompact complete Riemannian manifold ($n\geq 2$).
The Laplace-Beltrami operator   on   $(M,g)$ is essentially self-adjoint on $C^{\infty}_0(M)$.  
We  denote the self-adjoint extension by $\Delta$ (the Laplacian).

Assume there exists $U\subset M$ such that  $M\backslash U$  is  connected
 and the induced outward normal exponential map $\exp_{\partial U}^{\perp} : N^{+}(\partial U) \to M - \overline{U}$   is a diffeomorphism, where $N^{+}(\partial U) = \{ v\in T(\partial U) \mid v {\rm ~is~outward~normal~to~}\partial U \}$.
As in \cite{kumura2010radial,kumuraflat}, let $r$ be the distance function from $\partial U$ defined on   $M-\overline{U}$.

 We   are interested in the spectral theory of $\Delta$ and  asymptotic behavior of  the eigen-solutions of  
 \begin{equation}\label{Geigen}
    -\Delta u+Vu=\lambda u
 \end{equation}
as $r(x)$ goes to infinity.

For Euclidean space $\mathbb{R}^n$, that is $M=\mathbb{R}^n$, there are rich results about spectral theory of $-\Delta +V$ with decaying potential $V$.
 A typical application of Weyl's  theorem states  that  the essential spectrum $\sigma_{\rm ess}(-\Delta+V)=[0,\infty)$ if $\limsup |V(x)|=0$. It is interesting to investigate if there exists eigenvalue embedded  into the essential spectrum.
 Kato \cite{kato}   addressed this problem and showed that   there is no eigenvalue $\lambda>a^2$ if $\limsup |x||V(x)|=a$.
 This implies there is no eigenvalue embedded into  the essential spectrum if
  $V(x)=\frac{o(1)}{1+|x|}$. There is an alternative proof similar to Kato's  by Vakulenko \cite{va}.
  By Neuman-Wigner type functions \cite{von1929uber},
$V(x)=\frac{o(1)}{1+|x|}$ is optimal so that $V(x)=\frac{o(1)}{1+|x|}$ is a spectral transition for eigenvalue embedded into the essential spectrum.
For more   examples  about (finite or dense) eigenvalues embedded into essential spectrum, see \cite{sim1,sim2,nab}.
Under stronger assumption on the perturbation, for example $V(x)=\frac{O(1)}{(1+|x|)^{1+\delta}}$ for some $\delta>0$ or   $V\in L^p(\mathbb{R}^n)$ for proper $p>0$,  the limiting absorption principle holds, originally from Agmon's theory \cite{agmon1975,sim3}.   Thus  operator  $-\Delta+V$  has no singular continuous spectrum. See the survey paper \cite{schlag2007dispersive}
for more details.
For one dimensional case, there are more results. For example, $V(x)=\frac{O(1)}{1+|x|}$ is a spectral transition for singular continuous spectrum embedded  into the essential spectrum \cite{kiselev2005imbedded,MR2307748}.
Agmon \cite{ag} and Simon \cite{sim4}, using Kato's methods, independently
 obtained the quantitative bounds for a class of potentials  $V (x) = V_1(x)+V_2(x)$,
 where $ \limsup|x||V_1(x)| <\infty$,  $\limsup V_2(x) =0$  and
$ \limsup |x||\frac{\partial V_2}{\partial r}| <\infty$ ($\frac{\partial V_2}{\partial r}$ is the derivative with radial direction).   We  refer the readers to Simon's  review \cite{simon2017tosio} for the full details on Kato's method, its applications  and related topics.

There are a series of Kumura's and Donnlley's  papers \cite{donnelly1990negative,donnelly1999,kumura1997essential,kumura2010radial,kumuraflat}  studying the eigenvalues embedded into essential spectrum of the Laplacian on manifolds. See Donnlley's review \cite{donnelly2010spectral}. The results are based on construction of energy functions. However they do not fully  use Kato's method. As a result, they need some geometric condition that we have  shown to be unnecessary.

Our  goal  is to develop Kato's method  (also Agmon's and Simon's generalizations) on manifolds.
This is the first of our series papers,  which in particular  implies sharp bounds  for asymptotically hyperbolic manifolds.
Let us consider the eigen-solution $-\Delta u+Vu=\lambda u$.
 We show that under some weak convexity assumption on a manifold,
asymptotical  behaviors of $\Delta r$ and potentials $V$ can determine whether there is an  eigenvalue embedded into essential spectrum,  where $\Delta r$ is the Laplacian of distance function $r(x)$.
$\Delta r$  comes from geometry and $V$ comes from the Sch\"odinger operator on Euclidean space.  The interesting thing is that
$\Delta r$ is the only term involved in the geometry, completely determining the threshold for embedded eigenvalue. For example, even to obtain  our Corollary \ref{Cor1},
Kumura \cite{kumura2010radial} needs an extra condition on Ricci curvature of a manifold.

The original idea of Kato \cite{kato}   to  study the growth of eigen-solutions on Euclidean space  contains four steps:  construct  energy function for eigen-equation;  prove the monotonicity of energy function with respect to $r$ ($r=|x|$ in the Schr\"odinger case); set up the positivity of initial energy; obtain the growth of eigen-solution.
 The first  challenge is to construct the energy function since the simple sum of potential energy and kinetic energy does not work well even for Euclidean space in higher dimensions. We will give the general construction of energy functions on manifolds, which can be adapted to  various  situations easily.
During the process, we  borrow  some derivative estimates from \cite{kumura2010radial} to set up the  monotonicity of energy function (see \S \ref{der}).
However, we  improve the previous  arguments significantly in several aspects,  including the subtle   geometry analysis.
    Another  main novelty here is that we  give   universal method  to treat all cases of manifolds and potentials.
 Here, we just fix some indices to make energy functions work for this paper. We
believe our  method has a  wider applicability. For example, in the following \cite{LiuII}, we give a new way to verify the positivity of initial energy, which works well for the asymptotically flat manifolds.

 The results of \cite{donnelly1979pure,kumura1997essential} show that $ \sigma_{\rm ess}(-\Delta)=[\frac{a^2}{4},\infty)$ if
$\lim \Delta r =a$.

In order to describe  our results, some notations are necessary.
Let $g$ be the metric and  $\nabla$  be  the covariant derivative. Denote
Hessian of $r$ by $\nabla dr$. For simplicity, let $dx$  be volume form (or restriction on sphere if necessary). Sometimes we also use $|\cdot|$ as the norm of vector.
All the functions  $f$ on the manifolds in this paper depend on $x$. For simplicity, we ignore the dependence on sphere and use $f(r)$ instead of $f(x)$.


Let $u $ be a nonzero real solution\footnote{Actually, all the results in the paper hold for  complex solution $u$. We give up  it here because it is not our main goal. } of eigen-equation \eqref{Geigen} and
 define
 \begin{eqnarray*}
   M(r) &=& M(r;u) =(\int_{|r(x)|=r}|u(x)|^2 dx)^{\frac{1}{2}},\\
   N(r) &=& N(r;u)=(\int_{|r(x)|=r}|\frac{\partial u}{\partial r} |^2dx)^{\frac{1}{2}}. 
 \end{eqnarray*}

Our first main result  is
\begin{theorem}{\rm [\textbf{Basic version}]}\label{Mainthm1}
Let  the potential  $V(r)=V_1(r)+V_2(r)$. Assume
\begin{equation*}
    \limsup_{r\to \infty} |rV_1(r)|\leq a_1, \limsup_{r\to \infty}|V_2(r)|=0, \limsup_{r\to \infty}r|\frac{\partial V_2}{\partial r}|\leq a_2,
\end{equation*}
for some non-negative constants $a_1,a_2$.

Assume
\begin{equation*}
   \liminf_{r\to \infty}[  r \nabla dr-a_3\hat{g}]\geq 0,
\end{equation*}
for some $a_3>0$,
where $ \hat{g}=g-dr\otimes dr$, and
\begin{equation*}
 \limsup_{r\to \infty}  r|\Delta r-a_4-\frac{a_5}{r}|\leq  \delta,
\end{equation*}
for some non-negative constants  $ a_4,a_5,\delta$.

Suppose
\begin{equation}\label{Gcons}
    \mu>\delta,2a_3> \mu+\delta, a_3>  1+\delta ,
\end{equation}
and
\begin{equation}\label{Gconl}
   \lambda>\max\{\frac{a_4^2}{4}+\frac{a_2}{\mu- {\delta}}+\frac{1}{4}\frac{(2 {a}_1+{\delta} a_4)^2}{\mu^2-{\delta}^2},\frac{a_4^2}{4}+\frac{a_2}{2(a_3-\delta)}\}.
\end{equation}
Then we have
\begin{equation*}
   \liminf_{r\to \infty} r^{\mu} [M(r)^2+N(r)^2] = \infty .
\end{equation*}


\end{theorem}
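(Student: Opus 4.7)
The plan is to follow Kato's four-step strategy, adapted to the Riemannian setting: (i) construct a weighted energy $E(r)$ living on the level sets $\{r(x)=r\}$, quadratic in $u$, $\partial_r u$ and the tangential gradient $\nabla_S u := \nabla u-(\partial_r u)\,\nabla r$; (ii) show $E'(r)\ge 0$ for $r$ large enough, by substituting the eigen-equation and invoking the geometric and potential hypotheses; (iii) verify positivity of $E$ at a sufficiently large base radius $r_0$; (iv) combine monotonicity and positivity to extract the stated growth of $r^{\mu}(M^2+N^2)$.

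A natural weighted ansatz for step (i), inspired by Agmon's and Simon's refinements of Kato, is
\begin{equation*}
E(r) = r^{\mu}\int_{\{r(x)=r\}}\Big[\,|\partial_r u|^2 + (\lambda-V_2)\,u^2 + \tfrac{2}{r}h(r)\,u\,\partial_r u - \tfrac{1}{r^2}|\nabla_S u|^2\,\Big]\,dx,
\end{equation*}
with $h$ a bounded correction of order $a_1$ designed to absorb $V_1\,u^2$ after integration by parts in $r$. The three driving identities are the radial/tangential decomposition $\Delta u = \partial_r^2 u + (\Delta r)\,\partial_r u + \Delta_S u$, the level-set derivative rule $\frac{d}{dr}\int_{\{r(x)=r\}}F\,dx = \int_{\{r(x)=r\}}(\partial_r F + F\,\Delta r)\,dx$ (which follows from the fact that the induced volume form grows at logarithmic rate $\Delta r$), and the Bochner-type identity $\frac12\partial_r|\nabla_S u|^2 = \langle\nabla(\partial_r u),\nabla_S u\rangle + (\nabla dr)(\nabla_S u,\nabla_S u)$, which is precisely what brings the Hessian $\nabla dr$ into the estimate.

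For step (ii), differentiating $E$, substituting $\partial_r^2 u$ from the eigen-equation, and integrating the angular Laplacian by parts on the level set, one obtains $E'(r)=\int_{\{r(x)=r\}}Q\,dx$ with $Q$ a quadratic form in $(u,\partial_r u,\nabla_S u)$. The hypothesis $r\nabla dr\ge a_3\hat g$ forces the $|\nabla_S u|^2$-coefficient of $Q$ to be bounded below by $(2a_3-\mu-\delta)/r^{3}$ (this is where $2a_3>\mu+\delta$ enters); the hypothesis on $\Delta r$ (asymptotic to $a_4+a_5/r$ with error $\delta/r$) controls the $(\partial_r u)^2$- and $u^2$-coefficients, producing a leading factor of $\mu-\delta$ (this is where $\mu>\delta$ enters); the bound $|rV_1|\le a_1$ is absorbed through the cross term via $h$; and the bound $r|\partial_r V_2|\le a_2$ contributes the loss $a_2/(\mu-\delta)$. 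The algebraic constraints \eqref{Gcons} together with the first bound in \eqref{Gconl} are precisely what is needed to render $Q$ non-negative, while the second bound in \eqref{Gconl}, involving $2(a_3-\delta)$, corresponds to the threshold coming from the angular contribution.

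Step (iii) is the most delicate part: one must rule out the possibility $E(r)\le 0$ for all large $r$. Under the monotonicity from step (ii) this would force $E$ to stay non-positive and non-decreasing, which on inspection of the explicit form of $E$ implies that both $M$ and $N$ decay so fast that unique continuation for the eigen-equation forces $u\equiv 0$---contradicting nontriviality. Step (iv) is then a routine consequence: monotonicity and positivity give $E(r)\ge E(r_0)>0$ for all $r\ge r_0$; the divergence $\liminf r^{\mu}[M^2+N^2]=\infty$ follows either by iterating with a slightly smaller exponent $\mu'<\mu$ (still satisfying \eqref{Gcons}--\eqref{Gconl}) and multiplying by $r^{\mu-\mu'}$, or by reading off a stronger differential inequality of the form $E'(r)\ge (\kappa/r)E(r)$ from the computation in step (ii). The principal obstacle I anticipate is step (ii): selecting $h$ and carefully organizing the many lower-order errors---those arising from $\Delta r - a_4 - a_5/r$ and from $r\nabla dr - a_3\hat g$---so that each is compensated precisely by one of the terms in \eqref{Gcons}--\eqref{Gconl}, without degrading the constants below the stated thresholds.
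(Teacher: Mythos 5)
Your outline reproduces Kato's scheme, but two of its steps do not survive contact with the actual setting of the theorem. The first and most serious gap is the absence of the gauge transformation. The paper conjugates by $e^{\rho}$ with $2\rho'=a_4+\frac{a_5}{r}$ (i.e.\ works with $v=e^{\rho}u$ and integrals weighted by $e^{-2\rho}$); this is what cancels the non-decaying part of $\Delta r$ and produces the effective potential shift $V_0\approx\frac{a_4^2}{4}$, hence the threshold $\lambda>\frac{a_4^2}{4}+\cdots$. Your ansatz is written directly in $u$ with only an $O(1/r)$ cross term $\frac{2}{r}h\,u\,\partial_r u$, which is calibrated to the Euclidean case $a_4=0$. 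When $a_4>0$ it fails at leading order: take the model $\Delta r\equiv a_4$, $V=0$. The level-set derivative rule contributes $+a_4$ times the integrand, while substituting $\partial_r^2u=-a_4\partial_ru-\Delta_Su+(V-\lambda)u$ into $\partial_r|\partial_ru|^2$ contributes $-2a_4|\partial_ru|^2$, so the $(u,\partial_ru)$-block of $Q$ is, to leading order, $-a_4|\partial_ru|^2-2\lambda\,u\,\partial_ru+\lambda a_4 u^2$, whose discriminant $4\lambda(\lambda+a_4^2)$ is positive; the form is indefinite and no choice of bounded $h$ repairs an $O(1)$ defect with an $O(1/r)$ term. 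You need either the exponential weight or, equivalently, an $O(1)$ cross term $\sim -a_4\,u\,\partial_ru$ built into $E$ from the start. (Relatedly, with $\nabla_S u$ defined as the tangential part of the full gradient, the factor $\frac{1}{r^2}$ in front of $|\nabla_Su|^2$ is spurious.)

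The second gap is step (iii). Non-positivity of $E$ on a half-line does not imply decay of $M$ and $N$: the density contains $-|\nabla_Su|^2$ with a negative sign, so $E\le 0$ can hold simply because the angular energy dominates, with no decay whatsoever, and your appeal to unique continuation has nothing to bite on. The paper's actual mechanism is different and is the crux of the argument: it introduces the auxiliary family $v_m=r^mv$ and energies $F(m,r,t,s)$ with a parameter $t>0$, proves monotonicity of $F(m,\cdot,t,0)$ for all sufficiently large $m$, uses unique continuation only to find one radius $R_0$ with $\int_{S_{R_0}}v^2e^{-2\rho}\neq0$ so that $F(m_0,R_0,t,0)>0$ for $m_0$ large, and then transfers positivity from $F(m_0,\cdot,t,0)$ to the $m=0$ energy along a sequence of radii where $\int_{S_r}v^2e^{-2\rho}$ is decreasing (the complementary case, where this integral is eventually increasing, being trivial for the stated conclusion). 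Without this $r^m$-amplification and comparison device, your step (iii) is an assertion rather than a proof. Your steps (ii) and (iv), by contrast, are in the right spirit and match the paper's use of the hypotheses $2a_3>\mu+\delta$, $\mu>\delta$, and the two bounds in \eqref{Gconl}, once the energy is corrected as above.
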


Based on  Theorem \ref{Mainthm1} \footnote{By the fact that $\Delta+V$ is essentially selfadjoint, we have $\nabla u\in L^2(M)$
if the eigensolution $u\in L^2(M)$}, we have several immediate  corollaries.
\begin{corollary}\label{Cor1}
Let  the potential  $V(r)=V_1(r)+V_2(r)$. Suppose
\begin{equation*}
    |V_1(r)|=\frac{o(1)}{r}, |V_2(r)|=o(1), |\frac{\partial V_2}{\partial r}|=\frac{o(1)}{r},
\end{equation*}
as $r$ goes to infinity.

Suppose
\begin{equation*}
   \liminf_{r\to \infty}[  r \nabla dr-(1+\epsilon))\hat{g}]\geq 0,
\end{equation*}
for some $\epsilon>0$, and
\begin{equation*}
   \Delta r=a+\frac{b}{r}+ \frac{o(1)}{r}.
\end{equation*}


Then for any $\lambda>\frac{a^2}{4}$ and $\mu>0$, we have
\begin{equation*}
   \liminf_{r\to \infty} r^{\mu} [M(r)^2+N(r)^2] = \infty .
\end{equation*}
In particular, $-\Delta$ admits no   eigenvalue larger than $ \frac{a^2}{4}$.

\end{corollary}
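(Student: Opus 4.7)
The plan is to obtain Corollary~\ref{Cor1} by specializing Theorem~\ref{Mainthm1} with appropriate parameter choices, and then deducing the spectral consequence from $L^2$-integrability of eigenfunctions.

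First I would match parameters: the decay hypotheses on $V_1$, $V_2$, $\partial_r V_2$ allow $a_1 = a_2 = 0$; the assumption $\Delta r = a + b/r + o(1)/r$ means $r|\Delta r - a - b/r|$ is eventually bounded by any prescribed $\delta > 0$; and the Hessian hypothesis gives $a_3 = 1+\epsilon$. Set $a_4 = a$, $a_5 = b$. With these choices, \eqref{Gconl} simplifies to
\begin{equation*}
\lambda > \frac{a^2}{4} + \frac{1}{4}\cdot\frac{\delta^2 a^2}{\mu_0^{\,2} - \delta^2},
\end{equation*}
whose right-hand side tends to $a^2/4$ as $\delta \to 0$. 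So given any $\lambda > a^2/4$ and any $\mu_0 \in (0,\, 2(1+\epsilon))$, I would choose $\delta > 0$ with $\delta < \min\{\mu_0,\, \epsilon,\, 2(1+\epsilon) - \mu_0\}$ and so small that the displayed inequality holds. Conditions \eqref{Gcons} are then automatic, and Theorem~\ref{Mainthm1} yields $\liminf_{r\to\infty} r^{\mu_0}[M(r)^2 + N(r)^2] = \infty$.

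To reach every $\mu > 0$, a trivial bootstrap suffices: for $\mu \geq 2(1+\epsilon)$ one picks any $\mu_0 \in (0,\, 2(1+\epsilon))$ and writes $r^\mu[M^2+N^2] = r^{\mu-\mu_0}\cdot r^{\mu_0}[M^2+N^2]$, where both factors diverge. Finally, for the spectral conclusion, suppose $\lambda > a^2/4$ is an eigenvalue with $L^2$-eigenfunction $u$; by the footnote $\nabla u \in L^2(M)$, and the co-area formula (using $|\nabla r|\equiv 1$) gives $\int_1^\infty [M(r)^2+N(r)^2]\,dr < \infty$. Applying the growth statement with $\mu = 1$ forces $M(r)^2+N(r)^2 > K/r$ for arbitrarily large $K$ and all sufficiently large $r$, contradicting integrability. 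The only step that requires any care is the simultaneous satisfaction of \eqref{Gcons} and \eqref{Gconl} as $\delta$ shrinks — but since the right-hand side of \eqref{Gconl} is continuous in $\delta$ with the correct limit, this is immediate.
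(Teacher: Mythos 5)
Your proposal is correct and is exactly the intended argument: the paper's own proof of Corollary \ref{Cor1} is a one-line reduction to Theorem \ref{Mainthm1}, and you have simply filled in the parameter choices ($a_1=a_2=0$, $a_3=1+\epsilon$, $a_4=a$, $a_5=b$, $\delta\to 0$), the trivial passage from $\mu_0<2(1+\epsilon)$ to all $\mu>0$, and the standard coarea/integrability contradiction that the paper delegates to its footnote. No gaps.
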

\begin{corollary}\label{Cor2}
Suppose
\begin{equation*}
  \limsup_{r\to \infty}  r|\nabla dr  -\hat{g}|\leq A,
\end{equation*}
and $(n-1)A<1$.
Then $-\Delta$ does not have eigenvalue larger than $
    \frac{(n-1)^2}{4}+\frac{(n-1)^4A^2}{4(1-(n-1)^2A^2)}$.
\end{corollary}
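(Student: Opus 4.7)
The plan is to deduce Corollary \ref{Cor2} as an immediate consequence of Theorem \ref{Mainthm1} by reading off the parameters $a_1,\ldots,a_5, \delta, \mu, a_3$ from the single hypothesis $\limsup_{r\to\infty} r|\nabla dr - \hat g| \leq A$. Since no potential is present I take $V_1 = V_2 = 0$, so $a_1 = a_2 = 0$. For the convexity hypothesis I write
\[
r\nabla dr - a_3 \hat g = (r - a_3)\hat g + r(\nabla dr - \hat g),
\]
whose second summand is bounded in norm by $A + o(1)$ while the first grows linearly in $r$, so $\liminf_{r\to\infty}[r\nabla dr - a_3 \hat g] \geq 0$ holds for every fixed $a_3 > 0$. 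For $\Delta r$ I take the trace: since both $\nabla dr$ and $\hat g$ annihilate $\nabla r$, they live in the $(n-1)$-dimensional orthogonal complement of $\nabla r$ where $\hat g$ is the identity; the hypothesis therefore yields $|\Delta r - (n-1)| \leq (n-1)A/r + o(1/r)$, allowing me to take $a_4 = n-1$, $a_5 = 0$, and $\delta = (n-1)A$.

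I next select $\mu = 1$ and $a_3 = 1 + \delta + \epsilon$ for small $\epsilon > 0$. The three inequalities in \eqref{Gcons} read $1 > (n-1)A$ (the corollary's hypothesis), $2(1+\delta+\epsilon) > 1 + \delta$, and $a_3 > 1 + \delta$, all of which hold. With $a_1 = a_2 = 0$, the second entry of the maximum in \eqref{Gconl} collapses to $(n-1)^2/4$, while the first reduces to
\[
\frac{(n-1)^2}{4} + \frac{\delta^2 (n-1)^2}{4(1 - \delta^2)} = \frac{(n-1)^2}{4} + \frac{(n-1)^4 A^2}{4(1 - (n-1)^2 A^2)}
\]
after sending $\epsilon \to 0$. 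This matches the corollary's threshold exactly, so Theorem \ref{Mainthm1} yields $\liminf_{r\to\infty} r[M(r)^2 + N(r)^2] = \infty$ for every $\lambda$ strictly above this threshold.

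Suppose for contradiction that such a $\lambda$ is an eigenvalue with eigenfunction $u \in L^2(M)$. By essential self-adjointness (the footnote after Theorem \ref{Mainthm1}) one also has $\nabla u \in L^2(M)$, whence $\int_0^\infty (M(r)^2 + N(r)^2)\,dr < \infty$. But $\liminf r[M^2 + N^2] = \infty$ forces $M^2 + N^2 \geq K/r$ on every tail for an arbitrary $K > 0$, and $\int^\infty r^{-1}\,dr$ diverges, a contradiction. Beyond this bookkeeping I do not foresee a genuine obstacle; the one design choice worth flagging is $\mu = 1$, which is simultaneously the largest exponent compatible with the above $L^2$-integrability argument (for any $\mu > 1$ the function $r^{-\mu}$ becomes integrable) and the value that minimizes the threshold in \eqref{Gconl} within the admissible range $\mu \in (\delta, 2a_3 - \delta)$.
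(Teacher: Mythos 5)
Your proposal is correct and follows the same route as the paper, whose entire proof of Corollary \ref{Cor2} is the one-line remark that it ``follows from Theorem \ref{Mainthm1} and the fact that $\Delta r$ is the trace of $\nabla dr$''; your write-up simply supplies the details (taking the trace to get $a_4=n-1$, $a_5=0$, $\delta=(n-1)A$, setting $a_1=a_2=0$, $\mu=1$, and running the $L^2$-contradiction), and the parameter bookkeeping checks out, reproducing the stated threshold exactly.
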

\begin{remark}\label{Redec}
By some direct modifications, a similar result can also be obtained under the assumption
\begin{equation*}
-\frac{B}{r}\leq  \nabla dr  -\hat{g}\leq \frac{A}{r}.
\end{equation*}
as $r$ goes to infinity.
Thus the  corollary  improves Theorem 1.1 in  \cite{kumura2010radial}  by removing extra assumption on the Ricci curvature.
\end{remark}
\begin{corollary}\label{Cor3}
Suppose there exists a   $r_0>0$  such that
\begin{equation*}
    \nabla dr\geq 0
\end{equation*}
for $r=r_0$,
and
\begin{equation*}
      -1-  \frac{2A}{r}\leq K_{\rm rad}(r)\leq-1+ \frac{2A}{r}<0,
\end{equation*}
for $r\geq r_0$, where $K_{\rm rad}(r)$ is the radial curvature \footnote{In Geometry, radial curvature is the sectional curvature with one fixed direction $\frac{\partial}{\partial r}$. We refer the reader to \cite{greene2006function} for formal definition and applications.}.
Suppose
\begin{equation*}
    (n-1)A<1.
\end{equation*}
Then $-\Delta$ does not have eigenvalues larger than $
    \frac{(n-1)^2}{4}+\frac{(n-1)^4A^2}{4(1-(n-1)^2A^2)}$.
    In particular,
    $-\Delta$ does not have eigenvalue larger than $
    \frac{(n-1)^2}{4}$ if $K_{\rm rad}(r)+1=\frac{o(1)}{r}$.
\end{corollary}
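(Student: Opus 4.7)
The strategy is to reduce Corollary \ref{Cor3} to Corollary \ref{Cor2} by showing that the two-sided pinching of the radial curvature near $-1$ forces $\nabla dr$ to approach $\hat{g}$ at rate $A/r$.

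Along an outward radial geodesic, the Hessian $H := \nabla dr$, viewed as a symmetric endomorphism of the bundle normal to $\partial_r$, obeys the matrix Riccati equation
\begin{equation*}
\nabla_{\partial_r} H + H^{2} + R = 0,
\end{equation*}
where $R$ is the curvature endomorphism $X \mapsto R(X,\partial_r)\partial_r$, whose eigenvalues are the radial sectional curvatures $K_{\rm rad}$. The hypothesis $\nabla dr \ge 0$ at $r=r_0$, combined with $K_{\rm rad}<0$ for $r\ge r_0$, prevents focal points, so $H$ stays positive semi-definite on $[r_0,\infty)$ and its eigenvalues $h(r)$ are well defined. I would then invoke Sturm/Riccati comparison eigenvalue-by-eigenvalue to reduce the problem to the scalar Riccati equations
\begin{equation*}
h' + h^{2} = 1 \pm \tfrac{2A}{r}.
\end{equation*}
The substitution $h = 1 + \phi(r)/r$ converts these into $\phi' + 2\phi = \pm 2A + O(1/r)$, an ODE with attracting equilibrium $\phi = \pm A$. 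Hence every eigenvalue of $H$ satisfies $h(r) = 1 + O(A/r)$, and more precisely
\begin{equation*}
-\tfrac{A+o(1)}{r}\,\hat{g} \;\le\; \nabla dr - \hat{g} \;\le\; \tfrac{A+o(1)}{r}\,\hat{g}
\end{equation*}
as $r\to\infty$. Taking operator norms gives $\limsup_{r\to\infty} r\bigl|\nabla dr - \hat{g}\bigr|\le A$, which is exactly the hypothesis of Corollary \ref{Cor2}.

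Applying Corollary \ref{Cor2} then yields the absence of eigenvalues above $\tfrac{(n-1)^2}{4} + \tfrac{(n-1)^4 A^2}{4(1-(n-1)^2A^2)}$. For the last sentence, under $K_{\rm rad}+1 = o(1)/r$ one may take the constant $A$ arbitrarily small: for every $A>0$ the pinching hypothesis eventually holds, so the threshold $\tfrac{(n-1)^2}{4}+\tfrac{(n-1)^4A^2}{4(1-(n-1)^2A^2)}$ applies and letting $A\downarrow 0$ gives the sharp threshold $\tfrac{(n-1)^2}{4}$.

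The main obstacle is the Riccati comparison: the curvature hypothesis only controls the eigenvalues of $R$, and one must argue that each eigenvalue of the matrix Hessian $H$ really inherits the one-sided bound coming from the corresponding scalar Riccati equation. This is where $\nabla dr(r_0)\ge 0$ plus $K_{\rm rad}<0$ is essential: it rules out eigenvalue collisions and Jacobi-field blow-up that would otherwise obstruct the scalar reduction. Extracting the sharp constant $A$ (and not, say, $2A$) from the nonlinear equation is the key computational point, and is exactly where the $h^{2}$ term in the Riccati equation halves the forcing $\pm 2A/r$.
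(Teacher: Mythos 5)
Your proposal is correct and takes essentially the same route as the paper: the paper's proof of Corollary \ref{Cor3} consists of invoking Lemma \ref{Lekey3} (the Hessian comparison $|\nabla dr-\hat{g}|\leq \frac{A+o(1)}{r}$, whose proof is delegated to the comparison theorem in \cite{kumura2010radial} and which you carry out explicitly via the matrix Riccati equation) and then applying Corollary \ref{Cor2}, with the final claim obtained by letting $A\downarrow 0$ exactly as you do.
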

Just we mentioned in the introduction,  in Theorem \ref{Mainthm1} $\Delta r$ is from the geometry and $V$ is the potential from Sch\"odinger operator on Euclidean  space.
For the potential part, we  develop  Agmon-Simon's generalization on manifolds. For the geometric part, we  just develop  Kato's method so that there is no derivative of $\Delta r$ involved in.
Our next two theorems  are to  develop Agmon-Simon's generalization on geometric part of manifolds. Namely,
if we know the information of  $\frac{\partial \Delta r}{\partial r}$ (or   gradient of $\Delta r$),  we can get similar results to Theorem \ref{Mainthm1}.

\begin{theorem}\label{Mainthm2} {\rm [\textbf{Gradient version}]}
Let  the potential  $V(r)=V_1(r)+V_2(r)$. Suppose
\begin{equation*}
    \limsup_{r\to \infty} |rV_1(r)|\leq a_1, \limsup_{r\to \infty}|V_2(r)|=0, \limsup_{r\to \infty}r|\frac{\partial V_2}{\partial r}|\leq a_2,
\end{equation*}
for some non-negative constants $a_1,a_2$.

Suppose
\begin{equation*}
   \liminf_{r\to \infty}[  r \nabla dr-a_3\hat{g}]\geq 0,
\end{equation*}
for some $a_3>1$.
Suppose
\begin{equation*}
 \Delta r=a_4+\frac{a_5}{r}+ \frac{\bar{\delta}(r)}{r}
\end{equation*}
and
\begin{equation*}
 \limsup_{r\to \infty}  |\frac{\partial \bar{\delta} (r)}{\partial r}|\leq \delta_1,   \limsup_{r\to \infty}  |(\nabla-\frac{\partial}{\partial r})\bar{\delta} (r)|\leq \delta_2,
 \limsup_{r\to \infty}  |\bar{\delta} (r)|\leq \delta,
\end{equation*}
for some non-negative constants  $ a_4,a_5,\delta_1,\delta_2,\delta$.

Suppose
\begin{equation*}
    2a_3> \mu ,
\end{equation*}
\begin{equation}\label{Gsecondlam2}
  \lambda>  \frac{a_4^2}{4}+\frac{1}{\mu}[a_2+\frac{(2a_1+\delta_1)^2}{4\mu}+\frac{\delta_2^2}{8a_3-4\mu}+\frac{a_4\delta_1}{2}],
\end{equation}
and
\begin{equation}\label{Gsecondlam1}
  \lambda>\frac{a_4^2}{4} + \min_{2\leq s_0\leq 2a_3}\{\frac{a_2}{s_0}+\frac{a_4\delta_1}{2s_0}+\frac{\delta_2^2}{(8 a_3 -4s_0)s_0}\}.
\end{equation}
Then we have
\begin{equation*}
    \liminf_{r\to \infty} r^{\mu} [M(r)^2+N(r)^2] = \infty.
\end{equation*}

\end{theorem}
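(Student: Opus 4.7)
The overall plan is to use the same four-step Kato strategy underlying Theorem \ref{Mainthm1} --- construct an energy function, prove its weighted monotonicity, verify positivity at a sufficiently large initial radius $r_0$, and integrate --- but with an energy function specifically tailored to exploit the hypothesis that only the derivatives of $\bar{\delta}(r)$ are small, not $\bar{\delta}$ itself. Concretely, I would introduce a weighted radial energy of the schematic form
\[
E(r) = N(r)^2 + G(r)\,M(r)^2 + F(r),
\]
where $G(r) = \lambda - \tfrac{a_4^2}{4} + \text{(lower-order shifts in $r$)}$, and $F(r)$ is a cross-term of the type $\int_{r(x)=r}\bar{\delta}(r)\,u\,\tfrac{\partial u}{\partial r}\,dx$ (or a closely related combination). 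The role of $F$ is precisely to absorb the $\bar{\delta}/r$ piece of $\Delta r$ after an integration by parts, so that when $E(r)$ is differentiated $\bar{\delta}$ appears only through $\partial_r \bar{\delta}$ (bounded by $\delta_1$) and its tangential gradient (bounded by $\delta_2$).

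Computing $(r^{\mu}E(r))'$ using the eigen-equation and the decomposition $\Delta u = \partial_r^2 u + (\Delta r)\,\partial_r u + \Delta_{\mathrm{tan}} u$ produces four kinds of principal terms: (i) a kinetic contribution carrying $\Delta r - a_4$ against $N(r)^2$; (ii) a potential contribution from $V_1$ and $\partial_r V_2$; (iii) a Hessian term $\int \nabla dr(\nabla_{\mathrm{tan}} u,\nabla_{\mathrm{tan}} u)\,dx$, which, thanks to $r\nabla dr \geq a_3\hat{g}$, is nonnegative and comparable to $a_3 r^{-1}|\nabla_{\mathrm{tan}} u|^2$; and (iv) cross terms of the form $\nabla \bar{\delta}\cdot u\,\nabla u$. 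Completing the square then couples the radial piece of (iv) with $V_1$ to produce a penalty of the form $(2a_1+\delta_1)^2/(4\mu)$, while Cauchy--Schwarz absorbs the tangential piece of (iv) into (iii), producing $\delta_2^2/(8a_3 - 4\mu)$. This yields \eqref{Gsecondlam2}. The alternative bound \eqref{Gsecondlam1} arises by introducing an auxiliary parameter $s_0\in[2,2a_3]$ which decouples the Cauchy--Schwarz splitting from the weight $\mu$; optimizing over $s_0$ at the end gives the second form of the threshold, which is sharper in the regime where $\mu$ is forced to be close to $2a_3$.

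Once the differential inequality $(r^{\mu}E(r))'\geq -(\text{integrable error})$ is established with $E(r_0)>0$ at some large $r_0$ (by the standard Kato contradiction argument: if $E$ stayed non-positive, $u$ would vanish identically by unique continuation), integration gives $r^{\mu}E(r)\to\infty$, equivalent to the stated conclusion. The main obstacle is the simultaneous balancing of $\delta_1$ and $\delta_2$ against $a_3$ and $\mu$: the Hessian positivity must dominate the tangential $\delta_2$-penalty (which is what forces $\mu<2a_3$ and $s_0\leq 2a_3$), and at the same time one must track bookkeeping terms of the form $a_4\delta_1/2$ and $a_4\delta_1/(2s_0)$ arising from the interaction of the correction $F(r)$ with the kinetic term (i). Getting these constants exactly as written in \eqref{Gsecondlam2}--\eqref{Gsecondlam1}, without losing the $\mu$-weight or the admissible range for $s_0$, is the delicate calculation that distinguishes the Gradient version from the Basic version.
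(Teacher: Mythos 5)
Your proposal is correct and follows essentially the same route as the paper: the cross-term $F(r)$ you introduce is exactly the paper's choice $q_1=\Delta r-2\rho'=\bar{\delta}/r$ in the general energy $F(m,r,t,s)$, which after integration by parts leaves only $\partial_r\bar\delta$ and the tangential gradient of $\bar\delta$, and your two Cauchy--Schwarz absorptions reproduce the penalties $\frac{(2a_1+\delta_1)^2}{4\mu}+\frac{\delta_2^2}{8a_3-4\mu}$ and $\frac{\delta_2^2}{(8a_3-4s_0)s_0}$ precisely as in Theorems \ref{Thmpositivezerosecond} and \ref{Thmpositivesecond}. The only slight imprecision is that \eqref{Gsecondlam1} is not an ``alternative'' to \eqref{Gsecondlam2}: both are needed, the former ensuring monotonicity of $F(m,r,t,s_0)$ for large $m$ (the initial-positivity step via unique continuation), the latter ensuring monotonicity at $m=0$ with weight $s$ close to $\mu$ (the growth step).
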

\begin{remark}
 \begin{itemize}
  \item The bounds  on the right of \eqref{Gsecondlam2} and \eqref{Gsecondlam1} depend on $\delta_1$, not $\delta$.
  \item We can also obtain some interesting corollaries like Corollaries \ref{Cor1} and  \ref{Cor2}. 
 \end{itemize}

\end{remark}
\begin{theorem}\label{Mainthm3}{\rm [\textbf{Mixed version}]}
Let  the potential  $V(r)=V_1(r)+V_2(r)$. Suppose
\begin{equation*}
    \limsup_{r\to \infty} |rV_1(r)|\leq a_1, \limsup_{r\to \infty}|V_2(r)|=0, \limsup_{r\to \infty}r|\frac{\partial V_2}{\partial r}|\leq a_2,
\end{equation*}
for non-negative constants $a_1,a_2$.

Suppose
\begin{equation*}
   \liminf_{r\to \infty}[  r \nabla dr-a_3\hat{g}]\geq 0,
\end{equation*}
for some $a_3>1$.
Suppose
\begin{equation*}
 \Delta r=a_4+\frac{a_5}{r}+ \frac{\bar{\delta}(r)}{r}
\end{equation*}
and
\begin{equation*}
 \limsup_{r\to \infty}  |\frac{\partial \bar{\delta} (r)}{\partial r}|\leq \delta_1,
 \limsup_{r\to \infty}  |\bar{\delta} (r)|\leq \delta,
\end{equation*}
for some non-negative constants  $ a_4,a_5,\delta_1,\delta$.

Suppose
\begin{equation}\label{Gconsthird}
    \mu>\delta,2a_3> \mu+\delta, a_3>  1+\delta ,
\end{equation}
and
\begin{equation}\label{Gconlthird}
   \lambda>\max\{\frac{a_4^2}{4}+\frac{a_2}{\mu-{\delta}}+\frac{a_4 {\delta}_1}{2(\mu-{\delta})}+\frac{ {a}_1^2}{\mu^2- {\delta}^2},\frac{a_4^2}{4}+\frac{2a_2+a_4\delta_1}{4(a_3-\delta)}\}.
\end{equation}
Then we have
\begin{equation*}
    \liminf_{r\to \infty} r^{\mu} [M(r)^2+N(r)^2] = \infty.
\end{equation*}

\end{theorem}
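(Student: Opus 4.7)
The plan is to follow the four-step Kato programme advertised in the introduction, adapting the construction used for Theorem~\ref{Mainthm1} so that the gradient hypothesis $|\partial_r\bar\delta|\leq\delta_1$ can be exploited through an integration by parts in~$r$. First, I would set up a weighted energy functional of the form
\begin{equation*}
    E(r) = r^{\mu}\Bigl[N(r)^2 + \bigl(\lambda - V_2(r) - \tfrac{a_4^2}{4}\bigr)M(r)^2\Bigr] + r^{\mu-1}\!\int_{r(x)=r}\! c(r)\, u\,\partial_r u\,dx,
\end{equation*}
where $c(r)$ is a bounded multiplier to be fixed at the end. The $r^{\mu}$ factor is what eventually yields the claimed growth rate; the shift by $-a_4^2/4$ is what makes the leading constant $a_4^2/4$ appear as the base of~\eqref{Gconlthird}; and the cross term is present precisely to absorb the boundary contributions produced when one integrates $V_2$ and $\bar\delta/r$ by parts in~$r$.

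Second, I would differentiate $E$ in~$r$ using the eigenequation~\eqref{Geigen} and the standard decomposition $\Delta u = \partial_r^2 u + (\Delta r)\partial_r u + \Delta_{\partial} u$, where $\Delta_{\partial}$ is the tangential Laplacian on $\{r(x)=r\}$. After Green's identity on the level set, the key cross term $\int\langle\nabla\partial_r u,\nabla_{\partial}u\rangle\,dx$ is rewritten via the Hessian identity $\partial_r |\nabla_{\partial}u|^2 = 2\langle\nabla\partial_r u,\nabla_{\partial}u\rangle - 2(\nabla dr)(\nabla_{\partial}u,\nabla_{\partial}u)$, so that the weak convexity $r\,\nabla dr\geq a_3\hat{g}$ contributes a positive tangential-energy term with coefficient $\geq 2a_3/r$. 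The potential piece $V_1$ is controlled directly by Cauchy--Schwarz against $M(r)N(r)$ using $|V_1|\leq a_1/r$, while $V_2$ is handled by an integration by parts in~$r$ that replaces it by $\partial_r V_2$ (controlled by $a_2/r$). The geometric error $\bar\delta/r$ is processed identically, producing the $\frac{a_4\delta_1}{2(\mu-\delta)}$ correction in~\eqref{Gconlthird} through $|\partial_r\bar\delta|\leq \delta_1$. Because no tangential-gradient hypothesis on $\bar\delta$ is available, the untreated residual size~$\delta$ must enter the denominators $(\mu-\delta)$ and $(a_3-\delta)$ exactly as in the basic version.

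Third, one collects everything into a quadratic form in $(N(r),M(r),\|\nabla_{\partial}u\|_{L^2})$ and completes the square. The conditions $\mu>\delta$ and $a_3>1+\delta$ in~\eqref{Gconsthird} keep the denominators positive and ensure that the weak-convexity gain strictly beats the contribution from differentiating the $r^{\mu}$ weight; the condition $2a_3>\mu+\delta$ is what permits both splittings appearing in the $\max$ of~\eqref{Gconlthird}. The two alternatives in that $\max$ correspond to the two natural allocations of the remaining $V_1^2$-type mass, either against the $r^\mu$-weight (first term) or against the tangential Hessian gain (second term). Under~\eqref{Gconlthird} the quadratic form is non-negative for all sufficiently large~$r$, so $dE/dr\geq 0$ and hence $E(r)\geq E(r_0)$ once $r\geq r_0$. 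Positivity of $E(r_0)$ at some large $r_0$ is then obtained by the same mechanism used for Theorem~\ref{Mainthm1}: the eigenequation rules out $M(r)^2+N(r)^2$ being too small on a sequence of radii tending to infinity. Reading off $r^{\mu}[M(r)^2+N(r)^2]\geq C>0$ and then replacing $\mu$ by $\mu+\epsilon$ (still satisfying~\eqref{Gconsthird} and~\eqref{Gconlthird}) upgrades the conclusion to~$+\infty$.

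The main obstacle is the numerical bookkeeping in the second step: the coefficients from the Hessian identity, the two integrations by parts, and the various Cauchy--Schwarz absorptions must combine to reproduce exactly the threshold~\eqref{Gconlthird}, not a looser one. Equally subtle is the choice of which terms to keep as genuine cross terms inside $E(r)$ and which to absorb into the positive quadratic form; a different choice distorts the constants in~\eqref{Gconlthird} and is the reason a mere copy of the basic-version energy is insufficient here. Once the correct multiplier $c(r)$ is found, monotonicity and the remaining steps proceed in parallel with the treatment of Theorem~\ref{Mainthm1}.
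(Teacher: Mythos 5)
Your overall architecture (a weighted energy, monotonicity via the Hessian identity and the convexity $r\nabla dr\geq a_3\hat g$, and placing $V_2$ and $\tfrac{a_4\bar\delta}{2r}$ inside the potential part of the energy so that only $\partial_r V_2$ and $\partial_r\bar\delta$ appear after differentiation) matches the paper's strategy for the mixed version. But there is a genuine gap at the very first step: you work with $u$ and the unweighted surface measure and never perform the conjugation $v=e^{\rho}u$ with $2\rho'=a_4+\frac{a_5}{r}$ (equivalently, you never insert the weight $e^{-2\rho}$ into the surface integrals). This is not cosmetic. Differentiating $\int_{S_r}f\,dx$ produces $\int_{S_r}f\,\Delta r\,dx$ (Lemma \ref{Keylemma2} with $\rho=0$), and since $\Delta r\to a_4$ the derivative of your $E(r)$ contains the zeroth-order block
\begin{equation*}
r^{\mu}\int_{S_r}\Bigl[-a_4|\tfrac{\partial u}{\partial r}|^2+a_4\bigl(\lambda-\tfrac{a_4^2}{4}\bigr)u^2-\tfrac{a_4^2}{2}\,u\,\tfrac{\partial u}{\partial r}\Bigr]dx,
\end{equation*}
which is an indefinite quadratic form in $(u,\partial_r u)$: on the model solution $u\sim e^{-a_4r/2}\cos(kr)$, $k^2=\lambda-\tfrac{a_4^2}{4}$, it oscillates with amplitude of order one. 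Your cross term $r^{\mu-1}\int c(r)\,u\,\partial_r u\,dx$ with bounded $c$ has derivative $O(r^{\mu-1})$ and cannot absorb an indefinite $O(r^{\mu})$ term; to cancel it you would need $c(r)\sim a_4 r$ together with a shift of the $M^2$ coefficient, which is exactly the conjugation in disguise. The paper does this once and for all in \S 2, after which the dangerous factor becomes $\Delta r-2\rho'=\bar\delta/r=O(1/r)$ and the bookkeeping you describe becomes feasible.

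Two further problems. First, your displayed $E(r)$ contains no tangential-gradient term, yet your own computation needs $\partial_r\int_{S_r}|\nabla_{\omega}u|^2dx$ in order to convert $\int\langle\nabla_{\omega}\partial_r u,\nabla_{\omega}u\rangle$ into the positive Hessian contribution; the piece $-\tfrac12 r^{\mu}\int_{S_r}|\nabla_{\omega}u|^2dx$ must sit inside the energy (it is the $-\tfrac12|\nabla v_m|^2$ in the paper's term ${\rm I}$), and it is there, not in the splitting of the $\max$ in \eqref{Gconlthird}, that the constraints $2a_3>\mu+\delta$ and $a_3>1+\delta$ originate. Second, positivity of the initial energy cannot be extracted from the single functional $E$: the paper runs the whole construction for the family $v_m=r^m v$, proves monotonicity of $F(m,r,t,s_0)$ for large $m$ and small $t>0$, obtains $F(m_0,R_0,t,0)>0$ from unique continuation, and then transfers positivity to the $m=0$ energy along a sequence of radii where $\int_{S_r}v^2e^{-2\rho}dx$ decreases. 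Your sketch asserts "the same mechanism" but provides no analogue of this $m$-parameter transfer, which is the other substantive half of the argument.
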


\begin{corollary}\label{Cor4}
Suppose there exists a   $r_0>0$  such that
\begin{equation*}
    \nabla dr\geq 0
\end{equation*}
for $r=r_0$,
and
\begin{equation*}
     -1-  \frac{2A}{r}\leq K_{\rm rad}(r)\leq -1+ \frac{2A}{r}<0,
\end{equation*}
for $r\geq r_0$, where $K_{\rm rad}(r)$ is the radial curvature.
Suppose
\begin{equation*}
    (n-1)A<1.
\end{equation*}
Then $-\Delta$ does not have eigenvalue larger than $
    \frac{(n-1)^2}{4}+\frac{2(n-1)^2A}{1-(n-1)A}$.
\end{corollary}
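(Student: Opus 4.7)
The plan is to derive from the radial-curvature pinching the precise Hessian and Laplacian estimates required as input of Theorem \ref{Mainthm3}, and then apply that theorem with $a_1=a_2=0$. Throughout I would take $a_4=n-1$ and $a_3$ arbitrarily large.

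First I would analyze the Hessian $\nabla dr$ via the Riccati equation along a perpendicular Jacobi field. For each principal eigenvalue $\lambda(r)$ of $\nabla dr$ one has
\begin{equation*}
\partial_r\lambda + \lambda^2 = -K_{\mathrm{rad}}(r),
\end{equation*}
so the pinching $-1-2A/r\le K_{\mathrm{rad}}\le -1+2A/r$ sandwiches $\lambda$ between the solutions of the comparison ODEs $\partial_r\lambda_\pm = -\lambda_\pm^2 + 1 \mp 2A/r$ launched from the convex initial data $\lambda(r_0)\ge 0$. A Gr\"onwall argument linearising around $\lambda\equiv 1$ (i.e.\ writing $\lambda=1+\phi$ and reading off the leading balance $-2\phi\approx \pm 2A/r$) gives $\lambda(r) = 1 + O(A/r)$ with implied constant $1$. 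Tracing across the $n-1$ perpendicular directions produces $\Delta r = (n-1) + \bar\delta(r)/r$ with $\limsup_{r\to\infty}|\bar\delta(r)| \le (n-1)A$, and simultaneously $\nabla dr\to\hat{g}$, so $\liminf_{r\to\infty}[r\nabla dr - a_3\hat{g}]\ge 0$ holds automatically for every fixed $a_3$.

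Second I would control $\partial_r\bar\delta$ through the Bochner-type identity $\partial_r(\Delta r) = -|\nabla dr|^2 - \mathrm{Ric}(\partial_r,\partial_r)$. The Hessian pinching gives $|\nabla dr|^2 = (n-1) + O((n-1)A/r)$ with constant $2$, and the curvature pinching gives $\mathrm{Ric}(\partial_r,\partial_r) = -(n-1) + O((n-1)A/r)$ with constant $2$, so $|\partial_r(\Delta r)|\le 4(n-1)A/r + o(1/r)$. Matching this against $\partial_r(\Delta r) = \bar\delta'(r)/r - \bar\delta(r)/r^2$ yields $\limsup_{r\to\infty}|\partial_r\bar\delta| \le \delta_1$ with $\delta_1 = 4(n-1)A$.

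Third I would apply Theorem \ref{Mainthm3} with $a_1=a_2=0$, $a_4=n-1$, $\delta = (n-1)A$, $\delta_1 = 4(n-1)A$, $\mu = 1$, and $a_3$ chosen arbitrarily large. The admissibility conditions \eqref{Gconsthird} reduce to $\mu>\delta$, $\mu+\delta<2a_3$ and $a_3>1+\delta$, all of which are satisfied thanks to the standing hypothesis $(n-1)A<1$. As $a_3\to\infty$, the second branch of \eqref{Gconlthird} tends to $(n-1)^2/4$, while the first branch evaluates to
\begin{equation*}
\frac{(n-1)^2}{4} + \frac{a_4\delta_1}{2(\mu-\delta)} = \frac{(n-1)^2}{4} + \frac{2(n-1)^2 A}{1-(n-1)A},
\end{equation*}
which is exactly the threshold in the statement. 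Since an $L^2$ eigenfunction would force $M(r)^2+N(r)^2$ to decay, Theorem \ref{Mainthm3} rules out eigenvalues above this value.

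The main obstacle is the sharp bookkeeping in Step 1: the comparison model has non-constant curvature $-1\pm 2A/r$, so the classical constant-curvature Hessian comparison does not apply directly. The $O(A/r)$ rate and the leading constant $1$ must be extracted by asymptotic integration of the linearised Riccati equation, with the convexity at $r_0$ used to close the Gr\"onwall estimate. Analogous care is needed in translating the Hessian rate into the constant $2$ governing $|\nabla dr|^2$, since that constant feeds directly into $\delta_1$ and hence into the numerator of the final threshold.
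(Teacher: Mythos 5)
Your proposal is correct and follows essentially the paper's own route: your Steps 1--2 reproduce exactly the content of Lemma \ref{Lekey3} (which the paper obtains from the Riccati/comparison argument and the Weitzenb\"ock trace formula, citing \cite{kumura2010radial}), giving $\delta=(n-1)A$ and $\delta_1=4(n-1)A$, and your Step 3 is the intended application of the mixed version, Theorem \ref{Mainthm3}, with $a_1=a_2=0$, $a_4=n-1$, $\mu=1$ and $a_3$ large, whose first branch in \eqref{Gconlthird} evaluates to precisely the stated threshold while the second branch tends to $\frac{(n-1)^2}{4}$. (The paper's one-line proof cites Theorem \ref{Mainthm2}, but this is evidently a typo for Theorem \ref{Mainthm3} --- the gradient version would require control of the angular gradient $\delta_2$ and would not produce this bound --- so your choice of theorem is the right one.)
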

\begin{remark}
The lower bound  $ \frac{(n-1)^2}{4}+\frac{2(n-1)^2A}{1-(n-1)A}$ is  exactly   the  bound  given in \cite{kumura2010radial}.
\end{remark}
We should remind that  Corollaries \ref{Cor3} and \ref{Cor4} gave different bounds, and   $n$ and $A$ will decide which one is better.
However by   the combination of  energy functions in Theorems \ref{Mainthm2} and \ref{Mainthm3}, we can get a universal  bound, which is better than that in Corollaries \ref{Cor3} and \ref{Cor4}.

\begin{theorem}\label{goodbound}
Suppose there exists a   $r_0>0$  such that
\begin{equation*}
    \nabla dr\geq 0
\end{equation*}
for $r=r_0$,
and
\begin{equation*}
     -1- \frac{2A}{r}\leq K_{\rm rad}(r)\leq -1+\frac{2A}{r}<0,
\end{equation*}
for $r\geq r_0$, where $K_{\rm rad}(r)$ is the radial curvature.
Suppose
\begin{equation*}
    (n-1)A<1
\end{equation*}
and
   \begin{equation*}
    \lambda>\frac{(n-1)^2}{4} +\min_{\sigma\in [0,1]}\{\sigma^2\frac{(n-1)^4A^2}{4(1-(n-1)^2A^2)}+(1-\sigma)\frac{2(n-1)^2A}{1-(n-1)A}\},
   \end{equation*}
   Then $\lambda$ can not be an eigenvalue of the free Laplacian.
\end{theorem}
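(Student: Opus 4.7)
The plan is to combine the energy functions from Theorems \ref{Mainthm2} and \ref{Mainthm3} in a one-parameter convex combination and then optimize over the parameter.

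First, I will translate the radial curvature pinching into the hypotheses of Theorems \ref{Mainthm2} and \ref{Mainthm3}. The Hessian and Laplacian comparison theorems applied against the model spaces of constant radial curvature $-1 \pm 2A/r$, together with the boundary convexity $\nabla dr \geq 0$ at $r=r_0$ (which propagates outward by the Riccati evolution of $\nabla dr$ along geodesics), give $r\nabla dr - a_3 \hat{g} \geq 0$ for $a_3$ arbitrarily large and
\[
 \Delta r \;=\; (n-1) + \frac{\bar{\delta}(r)}{r}, \qquad |\bar{\delta}(r)|,\; r\,|\partial_r \bar{\delta}(r)| \;=\; O\bigl((n-1)A\bigr).
\]
Consequently the hypotheses of both earlier theorems are satisfied with $a_1 = a_2 = 0$, $a_4 = n-1$, and $\delta,\delta_1$ proportional to $(n-1)A$. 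Applied individually, the two theorems reproduce the thresholds of Corollaries \ref{Cor3} and \ref{Cor4}, which correspond to the endpoints $\sigma = 1$ and $\sigma = 0$.

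Second, for each $\sigma \in [0,1]$ I will form the combined energy
\[
 E_\sigma(r) \;=\; \sigma\, E^{\mathrm{grad}}(r) + (1-\sigma)\, E^{\mathrm{mix}}(r),
\]
where $E^{\mathrm{grad}}$ and $E^{\mathrm{mix}}$ are the energy functionals appearing in the proofs of Theorems \ref{Mainthm2} and \ref{Mainthm3} respectively, both built from the same eigensolution $u$. By linearity of $d/dr$, the monotonicity computation for $E_\sigma$ is the $\sigma$-weighted sum of the two original monotonicity computations. The key structural feature to exploit is that the $\bar{\delta}$-error enters $E^{\mathrm{grad}}$ quadratically (via completion of squares, contributing a term of order $\delta_1^2$) while it enters $E^{\mathrm{mix}}$ linearly (contributing a term of order $a_4\delta_1$). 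After optimizing the auxiliary parameter $\mu$ in each piece (sending $\mu \uparrow 1$, which is permitted because $(n-1)A<1$), the resulting threshold for monotonicity of $E_\sigma$ should be exactly
\[
 \frac{(n-1)^2}{4} \;+\; \sigma^{2}\,\frac{(n-1)^4 A^{2}}{4\bigl(1-(n-1)^{2}A^{2}\bigr)} \;+\; (1-\sigma)\,\frac{2(n-1)^{2}A}{1-(n-1)A}.
\]

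Third, assuming $\lambda$ strictly exceeds this threshold for some admissible $\sigma$, $E_\sigma$ is eventually nondecreasing. The standard Kato-type positivity argument for the initial energy—identical to the one used in the proofs of Theorems \ref{Mainthm2} and \ref{Mainthm3}—then forces $\liminf_{r\to\infty} r^{\mu}[M(r)^2 + N(r)^2] = \infty$ for some $\mu > 0$, contradicting $u \in L^2(M)$. Optimizing over $\sigma \in [0,1]$ yields the stated bound.

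The hard part will be verifying that $dE_\sigma/dr$ really decomposes as the clean $\sigma$-convex combination claimed, with no residual cross-terms arising when the two different correction weights are combined inside a single integration by parts on the level set $\{r = \mathrm{const}\}$. This should reduce to the observation that both energies are built from the same data $u$ and $\partial_r u$ with different radial weights, and that each exploits a different piece of the same Riccati identity for $\Delta r$; the derivative computation therefore parallels those already carried out for Theorems \ref{Mainthm2} and \ref{Mainthm3}, with no genuinely new geometric input beyond the estimates assembled in the first step.
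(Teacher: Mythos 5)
Your overall strategy---form a $\sigma$-convex combination of two of the paper's energy functions, use linearity of $\partial/\partial r$, run one combined Cauchy--Schwarz estimate, and optimize over $\sigma$---is exactly the paper's. But you have combined the wrong pair of energies, and with your pair the stated threshold is not reachable. The endpoint $\sigma^2\frac{(n-1)^4A^2}{4(1-(n-1)^2A^2)}$ is the bound of Corollary \ref{Cor3}, which is derived from the \emph{basic} version (Theorem \ref{Mainthm1}, via Corollary \ref{Cor2}): there $q_2=-\frac{a_4^2}{4}-\frac{a_4a_5}{2r}-V_2$, the term $\frac{a_4\bar{\delta}}{2}$ sits in the coefficient of the cross term $\frac{\partial v}{\partial r}v$ (see \eqref{Gpartialcase3}), and Cauchy--Schwarz squares it, giving $\frac{a_4^2\delta^2}{4(\mu^2-\delta^2)}$ with $\delta=(n-1)A$---quadratic in $\delta$, not in $\delta_1$. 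The gradient version (Theorem \ref{Mainthm2}) does \emph{not} reproduce this endpoint: its binding bound \eqref{Gsecondlam2} contains the linear penalty $\frac{a_4\delta_1}{2\mu}$ with $\mu\le 1$ forced by the $L^2$ contradiction, and with $\delta_1\approx 4(n-1)A$ this is of order $A$, not $A^2$ (also $\frac{\delta_1^2}{4}=4(n-1)^2A^2$ does not equal the claimed quadratic coefficient, so your ``order $\delta_1^2$'' reading is off as well). The failure is structural: both $E^{\mathrm{grad}}$ and $E^{\mathrm{mix}}$ use $q_2=-\frac{a_4^2}{4}-\frac{a_4a_5}{2r}-\frac{a_4\bar{\delta}}{2r}-V_2$, so the full term $-\frac{a_4\bar{\delta}_1}{4}$ appears in the $v^2$-coefficient of $\partial F/\partial r$ for \emph{both} summands; hence $\sigma E^{\mathrm{grad}}+(1-\sigma)E^{\mathrm{mix}}$ carries the full linear cost $\approx\frac{a_4\delta_1}{2}=2(n-1)^2A$ for every $\sigma$, and its monotonicity threshold never drops below (essentially) the Corollary \ref{Cor4} value. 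In particular your claim that the two endpoints reproduce Corollaries \ref{Cor3} and \ref{Cor4} is false at $\sigma=1$.

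The correct combination---and what the paper actually does, despite its prose referring to Theorems \ref{Mainthm2} and \ref{Mainthm3}---is to keep $q_1=0$ throughout and interpolate the potential correction: $q_2=-\frac{a_4^2}{4}-\frac{a_4a_5}{2r}-V_2-(1-\sigma)\frac{a_4\bar{\delta}}{2r}$, which by affineness of $F$ in $q_2$ is the convex combination of the Theorem \ref{Mainthm1} and Theorem \ref{Mainthm3} energies. This splits $\frac{a_4\bar{\delta}}{2}$ so that a fraction $\sigma$ remains in the cross term (cost $\sigma^2\frac{a_4^2\delta^2}{4(\mu^2-\delta^2)}$ after Cauchy--Schwarz) while a fraction $1-\sigma$ is differentiated into the diagonal (cost $(1-\sigma)\frac{a_4\delta_1}{2(\mu-\delta)}$); with $\mu\uparrow 1$, $a_4=n-1$, $\delta=(n-1)A$ and $\delta_1=4(n-1)A$ from Lemma \ref{Lekey3}, this is exactly the stated interpolated bound. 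Your remaining steps (geometric input from the curvature pinching, positivity of the initial energy, contradiction with $u\in L^2$) are fine and identical to the paper's.
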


\begin{remark}
Actually, by   the combination of  energy functions in Theorems \ref{Mainthm2} and \ref{Mainthm3}, we can set up a generally stronger Theorem with more generality.
We do not want to explore the general case here,  we just give a better bound than that in Corollaries \ref{Cor3} and \ref{Cor4}.
\end{remark}
We want to see more about Corollaries \ref{Cor3}, \ref{Cor4} and Theorem \ref{goodbound}.  Radial curvature $K_{\rm rad}(r)$ is a  feature of hyperbolic manifold and flat manifold.

For the asymptotically hyperbolic case,
the sharp transition is given by Kumura   \cite{kumura2010radial} by studying the eigen-solutions directly.  
 He   excludes eigenvalues greater than $\frac{(n-1)^2}{4}$ under the assumption that $K_{\rm rad}(r) + 1 = o(r^{-1})$, and  also  constructs a manifold with the radial curvature  $K_{\rm rad}(r) + 1 = O(r^{-1})$ and with an eigenvalue $\frac{(n-1)^2}{4} + 1$ embedding into its essential spectrum
 $[ \frac{(n-1)^2}{4} , \infty )$. Before that some partial results on the absence of eigenvalues were obtained in  papers \cite{donnelly1990negative,pinsky1979spectrum}.

For the asymptotically flat case, several authors    \cite{kumuraflat,donnelly1992,donnelly1999,escobar1992spectrum} showed the absence of positive eigenvalues of  free Laplacian under some assumptions on the radial curvature. Roughly speaking, they assume  $|K_{\rm rad}(r)|   \leq \frac{\delta}{1+r^2}$ for small $\delta$.
See Donnelly's review paper \cite{donnelly2010spectral} for more results.

Jitomirskaya and Liu also constructed examples which show that dense eigenvalues and singular continuous spectrum can embed  into essential spectrum of Laplacian in both cases \cite{jl1,jl2}. We   mentioned that
Kumura also studied other  related topics in papers \cite{MR2951504,MR2914877,MR3100776}. There are also other topics  about asymptotically  flat   and asymptotically  hyperbolic  manifolds.
See \cite{tao} and the references therein.

The rest of the paper is organized as follows:
in \S 2, we present some basic knowledge. In \S 3, we will give the general construction of energy functions.
In \S 4, by fixing some indices in the energy functions, we   prove   Theorem \ref{Mainthm1} and  Corollaries  \ref{Cor1}, \ref{Cor2}, \ref{Cor3}.
In \S 5, by adapting the energy functions, we  prove   Theorem \ref{Mainthm2}.
In \S 6, by adapting the energy functions, we  prove    Theorems   \ref{Mainthm3}, \ref{goodbound} and Corollary \ref{Cor4}.
Our proof  is self-contained except the unique continuation theorem and some basic geometry results (Lemma \ref{Lekey3}).
\section{Preliminaries and derivative lemma}\label{der}
Let   $S_t=\{x\in M: r(x)=t\}$,  $ \omega\in  S_r$ and $x\in M$. Thus   $(r,\omega)$ is a local coordinate system for $M$ \footnote{$\omega$ depends on $r$. We ignore the dependence for simplicity.}.
Let $<\cdot,\cdot>$ be  the metric on the  Riemannian manifold.  

Choose a function $\rho(r)$, which will be specified later.
 Let  $\hat{L}=e^{\rho}L e^{-\rho}$, where $L=-\Delta+V$. Then
 \begin{align*}
  \begin{CD}
    L^2(M,dg)           @>{ -\Delta + V}>>       L^2(M,dg)          \\
    @V{e^{\rho}}VV                         @VV{e^{\rho}}V  \\
    L^2(M,e^{-2\rho}dg)   @>>{ \hat{ L} }>           L^2(M,e^{-2\rho}dg)
  \end{CD}
\end{align*}

Let
\begin{equation*}
    v=e^{\rho}u.
\end{equation*}
Then, one has
\begin{equation*}
    \nabla u=-\rho^{\prime}e^{-\rho}v \nabla r+e^{-\rho} \nabla v,
\end{equation*}
and
\begin{equation*}
    \Delta u={\rm div} \nabla u=e^{-\rho}\Delta v -2\rho^{\prime}e^{-\rho}\frac{\partial v}{\partial r}+(\rho^{\prime 2}-\rho^{\prime \prime}-\rho^{\prime} \Delta r)e^{-\rho}v.
\end{equation*}

So  the eigen-equation \eqref{Geigen} becomes
\begin{equation}\label{equav}
    -\Delta v+2\rho^{\prime}\frac{\partial v}{\partial r}+(V+V_0)v=\lambda v,
\end{equation}
where
\begin{equation}\label{Gv0}
V_0=\rho^{\prime} \Delta r+\rho^{\prime \prime}-\rho^{\prime 2}.
\end{equation}

\begin{lemma}\label{Keylemma1}\cite{kumura2010radial}
Let $X$ be a vector field. Then
\begin{equation}\label{Gder1}
    \frac{\partial}{\partial r}\int_{S_r}<X,\nabla r> e^{-2\rho}dx=\int_{S_r}({\rm div} X -2\rho^{\prime}<X,\nabla r>) e^{-2\rho}dx.
\end{equation}

\end{lemma}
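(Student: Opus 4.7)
The plan is to reduce the statement to the divergence theorem on an annular region combined with the coarea formula, after first absorbing the weight $e^{-2\rho}$ into the vector field.

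First I would set $Y = e^{-2\rho}X$. Since $r$ is a distance function on $M\setminus\overline{U}$, we have $|\nabla r|=1$, so $\nabla(e^{-2\rho(r)}) = -2\rho'(r)\,e^{-2\rho}\nabla r$, and hence
\begin{equation*}
\divv Y \;=\; e^{-2\rho}\divv X + \langle \nabla(e^{-2\rho}),X\rangle \;=\; e^{-2\rho}\bigl(\divv X - 2\rho'\,\langle X,\nabla r\rangle\bigr).
\end{equation*}
Thus the right-hand side of \eqref{Gder1} equals $\int_{S_r}\divv Y\,dx$, and the left-hand side equals $\frac{\partial}{\partial r}\int_{S_r}\langle Y,\nabla r\rangle\,dx$. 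So it suffices to prove the unweighted identity
\begin{equation*}
\frac{\partial}{\partial r}\int_{S_r}\langle Y,\nabla r\rangle\,dx \;=\; \int_{S_r}\divv Y\,dx
\end{equation*}
for an arbitrary (smooth) vector field $Y$ on $M\setminus\overline{U}$.

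For this, I would use that under our hypothesis on $U$ the map $(r,\omega)\mapsto \exp_{\partial U}^{\perp}(r\omega)$ gives smooth geodesic-normal coordinates on $M\setminus\overline{U}$, so the coarea formula (using $|\nabla r|=1$) yields $dg = dr\wedge dx$ where $dx$ is the induced volume measure on $S_r$. Applying the divergence theorem to $Y$ on the annular region $A_{r_1,r_2}=\{r_1\le r\le r_2\}$, whose boundary consists of $S_{r_1}$ and $S_{r_2}$ with outward unit normals $\mp\nabla r$, gives
\begin{equation*}
\int_{A_{r_1,r_2}}\divv Y\,dg \;=\; \int_{S_{r_2}}\langle Y,\nabla r\rangle\,dx - \int_{S_{r_1}}\langle Y,\nabla r\rangle\,dx.
\end{equation*}
Rewriting the left-hand side via coarea as $\int_{r_1}^{r_2}\bigl(\int_{S_r}\divv Y\,dx\bigr)dr$ and differentiating in $r_2$ produces the claimed formula.

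Since this lemma is essentially a well-known consequence of the divergence theorem, I do not expect a genuine obstacle; the only points requiring care are (i) making sure $\nabla r$ is indeed the outward unit normal (which uses $|\nabla r|=1$ and the convention that $r$ increases away from $\partial U$), and (ii) the bookkeeping that converts the weighted identity into the unweighted one via the substitution $Y=e^{-2\rho}X$. Both are routine given the geometric setup already fixed in the introduction.
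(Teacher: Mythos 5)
Your proposal is correct and follows essentially the same route as the paper: compute $\divv(e^{-2\rho}X)=e^{-2\rho}(\divv X-2\rho'\langle X,\nabla r\rangle)$, apply the divergence theorem on the annulus $\{t_1\le r\le t_2\}$, and differentiate in the outer radius. Your version merely makes explicit the coarea/normal-coordinates bookkeeping that the paper leaves implicit.
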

\begin{proof}
First, one has
\begin{equation*}
    {\rm div} (X e^{-2\rho})=e^{-2\rho} {\rm div} (X )-2\rho^{\prime} e^{-2\rho}<X,\nabla r>
\end{equation*}
Integration by part, we get
\begin{equation*}
   \int_{S_{t_2}}<X,\nabla r> e^{-2\rho}dx- \int_{S_{t_1}}<X,\nabla r> e^{-2\rho}dx=\int_{t_1\leq|r(x)|\leq t_2}({\rm div} X -2\rho^{\prime}<X,\nabla r>) e^{-2\rho}dx,
\end{equation*}
which implies \eqref{Gder1}. 
\end{proof}

\begin{lemma}\label{Keylemma2}\cite{kumura2010radial}
\begin{equation*}
    \frac{\partial}{\partial r}\int_{S_r}f  e^{-2\rho}dx=\int_{S_r} [\frac{\partial f}{\partial r}+f(\Delta r-2\rho^{\prime})]e^{-2\rho}dx.
\end{equation*}
\end{lemma}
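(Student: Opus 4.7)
The plan is to deduce this lemma immediately from the preceding Lemma \ref{Keylemma1} by specializing the vector field to $X = f \nabla r$. Since $\nabla r$ is the unit gradient of the distance function, we have $\langle X, \nabla r\rangle = f |\nabla r|^2 = f$, so the left-hand side of Lemma \ref{Keylemma1} applied to this choice of $X$ is exactly $\frac{\partial}{\partial r}\int_{S_r} f\, e^{-2\rho}\,dx$, which is the quantity we want to differentiate.

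Next I would expand $\operatorname{div}(f \nabla r)$ using the standard product rule $\operatorname{div}(fY) = \langle \nabla f, Y\rangle + f \operatorname{div}(Y)$. Taking $Y = \nabla r$ gives $\operatorname{div}(f \nabla r) = \langle \nabla f, \nabla r\rangle + f \operatorname{div}(\nabla r) = \frac{\partial f}{\partial r} + f \Delta r$, where I use that $\langle \nabla f, \nabla r\rangle = \frac{\partial f}{\partial r}$ (the derivative in the unit radial direction) and that $\operatorname{div}(\nabla r) = \Delta r$ by the very definition of the Laplacian of $r$. Substituting into the integrand on the right-hand side of Lemma \ref{Keylemma1}, the expression $\operatorname{div}(X) - 2\rho' \langle X, \nabla r\rangle$ collapses to $\frac{\partial f}{\partial r} + f \Delta r - 2\rho' f = \frac{\partial f}{\partial r} + f(\Delta r - 2\rho')$, which is exactly the integrand in the claimed identity.

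There is essentially no obstacle here: the only ingredients are the eikonal identity $|\nabla r|=1$ (valid on $M - \overline{U}$, where the outward normal exponential map is a diffeomorphism and $r$ is smooth), the product rule for divergence, and the definition $\Delta r = \operatorname{div}(\nabla r)$. All three are standard facts of Riemannian geometry in the present setting, so the proof reduces to a one-line application of Lemma \ref{Keylemma1} combined with these identities.
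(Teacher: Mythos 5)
Your proof is correct and follows exactly the paper's own approach: specialize Lemma \ref{Keylemma1} to $X = f\nabla r$, compute $\operatorname{div}(f\nabla r) = \frac{\partial f}{\partial r} + f\Delta r$, and use $|\nabla r|=1$ to identify $\langle X,\nabla r\rangle = f$. You have simply spelled out the intermediate identities ($|\nabla r|=1$, the divergence product rule, $\Delta r = \operatorname{div}\nabla r$) that the paper leaves implicit.
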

\begin{proof}
Let $X= f \nabla r$.  By direct computation, one has
\begin{equation*}
    {\rm div} X= {\rm div}(f \nabla r)= \frac{\partial f}{\partial r}  +f\Delta r.
\end{equation*}
Putting $X$ into   Lemma \ref{Keylemma1},  Lemma \ref{Keylemma2} follows.
\end{proof}
Now we always assume $u$ is a nonzero solution of $-\Delta u+Vu=\lambda u$, where $V=V_1+V_2$.
Let
\begin{equation*}
    v_m=r^m v
\end{equation*}
with $m\geq 0$.
By \eqref{equav}, we get the equation of $v_m$,
\begin{equation}\label{equav1}
    \Delta v_m -(\frac{2m}{r}+2\rho^{\prime} )\frac{\partial v_m}{\partial r}+(\frac{m(m+1)}{r^2}+\frac{m}{r}(2\rho^{\prime}-\Delta r)-V_0-V_1-V_2+\lambda)v_m=0.
\end{equation}
\section{Construction of the energy functions}

 In this section, we will give the general construction of energy functions and derive the formulas for their derivatives.

 Let $A_r$ be the Laplacian on sphere $r$. Using $\Delta u=\frac{\partial^2 u}{\partial r^2}+\Delta r \frac{\partial u}{\partial r}+A_r u$,
  \eqref{equav1} becomes
 \begin{equation}\label{equav1apr416}
    \frac{ \partial^2v_m }{\partial r^2}-( -\Delta r+\frac{2m}{r}+2\rho^{\prime} )\frac{\partial v_m}{\partial r}+A_r v_m+(\frac{m(m+1)}{r^2}+\frac{m}{r}(2\rho^{\prime}-\Delta r)-V_0-V_1-V_2+\lambda)v_m=0.
\end{equation}
Let us mention our intuition to construct energy functions. We view \eqref{equav1apr416} as one dimensional Schr\"odinger operator ($r$ is the variable).
 \eqref{equav1apr416} is not the normal form-$-D^2+q$ since $-\Delta r+\frac{2m}{r}+2\rho^{\prime}$   is not 0.
So the first step we need to do is to choose $\rho$ such that $-\Delta r+\frac{2m}{r}+2\rho^{\prime}$ is smaller than   $\frac{O(1)}{r}$.
The energy function for equation $u^{\prime\prime}+qu=0$ is $\frac{1}{2}|u^{\prime}|^2+\frac{1}{2}qu^2$.
Similarly, the usual energy functions of \eqref{equav1apr416} are taken with the form as
$\frac{1}{2}|\frac{\partial v_m}{\partial r}|^2+\frac{1}{2}qv_m^2$ with average on the sphere. Since we can not make $-\Delta r+\frac{2m}{r}+2\rho^{\prime}$  zero,  extra term $q_1\frac{\partial v_m}{\partial r} v_m$ should be added into the energy functions. By the fact
\begin{equation*}
  \int_{S_r}[\frac{1}{2}|\frac{\partial v_m}{\partial r}|^2 +\frac{1}{2}(A_r v_m,v_m)] dx=\int_{S_r}[|\frac{\partial v_m}{\partial r}|^2-\frac{1}{2}|\nabla v_m|^2]dx,
\end{equation*}
it is natural to construct
\begin{eqnarray*}
  F(m,r,t,s) &=& r^s\int_{S_r}\frac{1}{2}[q_1\frac{\partial v_m}{\partial r} v_m+(\frac{m(m+1)}{r^2}-\frac{t}{r}+q_2+\lambda)v_m^2]e^{-2\rho}dx\\
  &&+r^s\int_{S_r}[|\frac{\partial v_m}{\partial r}|^2-\frac{1}{2}|\nabla v_m|^2]e^{-2\rho}dx \\
   &=& {\rm I} +{\rm II}+{\rm III},
\end{eqnarray*}
where
\begin{eqnarray*}
  {\rm I} &=& r^s\int_{S_r}[|\frac{\partial v_m}{\partial r}|^2-\frac{1}{2}|\nabla v_m|^2]e^{-2\rho}dx \\
   &=& \frac{1}{2} r^s\int_{S_r}[|\frac{\partial v_m}{\partial r}|^2-|\nabla_{\omega} v_m|^2]e^{-2\rho}dx ,
\end{eqnarray*}
and
\begin{equation*}
   {\rm II}= \frac{1}{2}r^s\int_{S_r}[\frac{m(m+1)}{r^2}-\frac{t}{r}+q_2+\lambda]v_m^2 e^{-2\rho}dx,
\end{equation*}
and
\begin{equation*}
   {\rm III}= \frac{1}{2}r^s\int_{S_r} [q_1\frac{\partial v_m}{\partial r} v_m]e^{-2\rho}dx.
\end{equation*}
We begin with the derivation of $\frac{\partial }{\partial r}{\rm I}$.

By Lemma \ref{Keylemma2},  one has
\begin{eqnarray*}
  \frac{\partial }{\partial r}{\rm I} &=& sr^{s-1}\int_{S_r }[\frac{1}{2}|\frac{\partial v_m}{\partial r}|^2-\frac{1}{2}|\nabla_{\omega} v_m|^2]e^{-2\rho}dx+r^s\int_{S_r}[\frac{\partial v_m}{\partial r}\frac{\partial^2v_m}{\partial r^2}-\frac{1}{2} \frac{\partial }{\partial r}<\nabla_{\omega} v_m,\nabla_{\omega}v_m>]e^{-2\rho} dx\\
   && +r^s\int_{S_r}  \frac{1}{2}(\Delta r-2\rho^{\prime})[|\frac{\partial v_m}{\partial r}|^2-|\nabla_{\omega} v_m|^2]e^{-2\rho}dx.
\end{eqnarray*}
Using $ \Delta v_m=\frac{\partial^2v_m}{\partial r^2}+\Delta r\frac{\partial v_m}{\partial r}+\Delta_{\omega} v_m$, we get
\begin{eqnarray*}
  \frac{\partial }{\partial r}{\rm I} &=& \int_{S_r}[\frac{s}{2}r^{s-1}|\frac{\partial v_m}{\partial r}|^2-2r^s\rho ^{\prime}|\frac{\partial v_m}{\partial r}|^2+r^s\frac{\partial v_m}{\partial r} \Delta v_m+ \frac{r^s}{2}(2\rho^{\prime}-\Delta r)|\frac{\partial v_m}{\partial r}|^2 ]e^{-2\rho}dx\\
   &&+
   r^{s}\int_{S_r}[(  -\frac{s}{2r }+\frac{1}{2}(2\rho^{\prime}-\Delta r))\hat{g}(\nabla v_m,\nabla v_m)]e^{-2\rho}dx\\
   &&+r^s\int_{S_r}[<\nabla_{\omega} \frac{\partial v_m}{\partial r},\nabla_{\omega}v_m>-\frac{1}{2} \frac{\partial }{\partial r}<\nabla_{\omega} v_m,\nabla_{\omega}v_m>]e^{-2\rho} dx.
\end{eqnarray*}
By some basic computation, one has
\begin{eqnarray*}
  <\nabla_{\omega} \frac{\partial v_m}{\partial r},\nabla_{\omega}v_m>-\frac{1}{2} \frac{\partial }{\partial r}<\nabla_{\omega} v_m,\nabla_{\omega}v_m> &=& <\nabla_{\omega} \frac{\partial v_m}{\partial r},\nabla_{\omega}v_m>- <\nabla _{ \frac{\partial }{\partial r}}\nabla_{\omega} v_m,\nabla_{\omega}v_m> \\
   &=& (\nabla dr)(\nabla_{\omega}v_m,\nabla_{\omega}v_m) .
\end{eqnarray*}
Finally we get
\begin{eqnarray}
 \nonumber \frac{\partial }{\partial r}{\rm I} &=& \int_{S_r}[\frac{s}{2}r^{s-1}|\frac{\partial v_m}{\partial r}|^2-2r^s\rho ^{\prime}|\frac{\partial v_m}{\partial r}|^2+r^s\frac{\partial v_m}{\partial r} \Delta v_m+ \frac{r^s}{2}(2\rho^{\prime}-\Delta r)|\frac{\partial v_m}{\partial r}|^2 ]e^{-2\rho}dx\\
   &&+
   r^{s}\int_{S_r}[( \nabla dr +(-\frac{s}{2r }+\frac{1}{2}(2\rho^{\prime}-\Delta r))\hat{g})(\nabla v_m,\nabla v_m)]e^{-2\rho}dx.\label{GpartialI}
\end{eqnarray}
Now we are in the position to obtain $ \frac{\partial}{\partial r}{\rm II}$.

By Lemma \ref{Keylemma2} again, one has
\begin{eqnarray}
  \nonumber  \frac{\partial}{\partial r}{\rm II}&= &\int_{S_r}  [\frac{\partial}{\partial r} \frac{r^s}{2}(\frac{m(m+1)}{r^2}-\frac{t}{r}+q_2+\lambda)v_m^2]e^{-2\rho}dx \\
 \nonumber   &&+\int_{S_r}(\Delta r-2\rho^{\prime})\frac{r^s}{2}[\frac{m(m+1)}{r^2}-\frac{t}{r}+q_2+\lambda]v_m^2e^{-2\rho}dx\\
 \nonumber   &=& \int_{S_r}  [ \frac{s-2}{2} r^{s-3}m(m+1)-\frac{s-1}{2}t r^{s-2}+\frac{r^s}{2}\frac{\partial q_2}{\partial r}+\frac{s}{2}r^{s-1}q_2+\lambda\frac{s}{2}r^{s-1}]v_m^2 e^{-2\rho}dx\\
 \nonumber   &&+r^s \int_{S_r}[ \frac{m(m+1)}{r^2}-\frac{t}{r}+q_2+\lambda]v_m\frac{\partial v_m}{\partial r} e^{-2\rho}dx\\
   &&+\frac{r^s}{2}\int_{S_ r}(\Delta r-2\rho^{\prime})[\frac{m(m+1)}{r^2}-\frac{t}{r}+q_2+\lambda]v_m^2e^{-2\rho}dx.\label{GpartialII}
\end{eqnarray}
Similarly,
by Lemma \ref{Keylemma2} again, we have
\begin{eqnarray}
 \nonumber\frac{\partial}{\partial r}{\rm III}&= &\int_{S_r} [\frac{1}{2} q_1r^s  v_m\frac{\partial v_m}{\partial r}(\Delta r-2\rho^{\prime})+\frac{\partial}{\partial r}(r^s \frac{1}{2} q_1\frac{\partial v_m}{\partial r}v_m) ]e^{-2\rho}dx\\
\nonumber   &=& \int_{S_ r} [\frac{r^s}{2} q_1  (\Delta r-2\rho^{\prime})+\frac{1}{2}sr^{s-1} q_1+\frac{1}{2}r^s\frac{\partial q_1}{\partial r}]\frac{\partial v_m}{\partial r}v_m]e^{-2\rho}dx\\
 \nonumber  &&+\int_{S_ r} [\frac{1}{2}r^s q_1|\frac{\partial v_m}{\partial r}|^2+\frac{1}{2}r^s q_1v_m\frac{\partial ^2v_m}{\partial^2 r}]e^{-2\rho}dx\\
\nonumber   &=&\int_{S_r} [\frac{r^s}{2} q_1  (\Delta r-2\rho^{\prime})+\frac{s}{2} r^{s-1}q_1+\frac{r^s}{2}\frac{\partial q_1}{\partial r}]\frac{\partial v_m}{\partial r}v_me^{-2\rho}dx\\
 \nonumber  &&+\int_{S_ r} [\frac{r^s}{2} q_1|\frac{\partial v_m}{\partial r}|^2+\frac{r^s}{2} q_1v_m(\Delta v_m-\Delta r \frac{\partial v_m}{\partial r}-\Delta_{\omega}v_m)]e^{-2\rho}dx.\label{GpartialIII}\\
\end{eqnarray}
Putting \eqref{GpartialI},\eqref{GpartialII}, \eqref{GpartialIII} together and using \eqref{equav1}, we conclude that
\begin{equation}\label{Gpartial}
     \frac{\partial F(m,r,t,s)}{\partial r} =\frac{\partial{\rm I}}{\partial r} +
     \frac{\partial {\rm II}}{\partial r}+\frac{\partial{\rm III}}{\partial r}\;\;\;\;\;\;\;\;\;\;\;\;\;\;\;\;\;\;\;\;\;\;\;\;\;\;\;\;\;\;\;\;\;\;\;\;\;\;\;\;\;\;\;\;\;\;\;\;\;\;\;\;\;\;\;\;\;\;\;\;
\end{equation}

\begin{eqnarray}
  \nonumber \;\;\;\;\;\;\;\;\; \;\;\;\;\;\;\;\;\;\;\;\;\;\;\;\;\;\;\;\;\;\;\;\;\;\;\;&=&\int_{S_r} [r^{s}( (\nabla dr) -(\frac{s}{2r }-\frac{1}{2}(2\rho^{\prime}-\Delta r))\hat{g})(\nabla v_m,\nabla v_m)+\frac{1}{2} q_1r^sv_m(-\Delta_{\omega}v_m)]e^{-2\rho} dx\\
   \nonumber &&+\int_{S_r} [2mr^{s-1} +\frac{r^s}{2}(2\rho^{\prime}-\Delta r)+\frac{1}{2} q_1r^s+\frac{s}{2}r^{s-1}]|\frac{\partial v_m}{\partial r}|^2 e^{-2\rho}dx\\
   \nonumber &&+\int_{S_r}[r^s(V_0+V_1+V_2+q_2-\frac{t}{r})+r^{s-1}m(\Delta r-2\rho^{\prime})]\frac{\partial v_m}{\partial r}v_m  e^{-2\rho}dx\\
 \nonumber   && + \int_{S_r} [\frac{1}{2}sr^{s-1} q_1+mr^{s-1} q_1+\frac{1}{2}r^s\frac{\partial q_1}{\partial r}]\frac{\partial v_m}{\partial r}v_m e^{-2\rho}dx\\
  \nonumber  && +\int_{S_r} [\frac{s-2}{2} r^{s-3}m(m+1)-\frac{s-1}{2}t r^{s-2}+\frac{r^s}{2}\frac{\partial q_2}{\partial r}+\frac{s}{2}r^{s-1}q_2+\lambda\frac{s}{2}r^{s-1}]v_m^2e^{-2\rho}dx\\
  \nonumber  &&+\int_{S_r}\frac{r^s}{2}(\Delta r-2\rho^{\prime})[\frac{m(m+1)}{r^2}-\frac{t}{r}+q_2+\lambda]v_m^2e^{-2\rho}dx\\
   \nonumber  && +\int_{S_r}-\frac{r^s}{2}  q_1[\frac{m(m+1)}{r^2}+\frac{m}{r}(2\rho^{\prime}-\Delta r)-V_0-V_1-V_2+\lambda] v_m^2e^{-2\rho}dx
\end{eqnarray}

\section{Proof of   Theorem \ref{Mainthm1} and some Corollaries }\label{proof1}
Let
\begin{eqnarray*}
  \bar{\delta} (r) &=& r(\Delta r-a_4-\frac{a_5}{r}) \\
  \bar{a}_1(r) &=& rV_1(r)\\
   \bar{a}_2(r) &=& r\frac{\partial V_2}{\partial r}.
\end{eqnarray*}

By the  assumptions of  Theorem \ref{Mainthm1},
\begin{equation*}
    \limsup_{r\to \infty} |\bar{\delta} (r)|\leq \delta, \limsup_{r\to \infty} |\bar{a}_1 (r)|\leq a_1,\limsup_{r\to \infty} |\bar{a}_2 (r)|\leq a_2.
\end{equation*}

Let $0<t<1$  be small enough,  $2\rho^{\prime}=a_4+\frac{a_5}{r}$  and $q_1=0$. Direct computation  of \eqref{Gv0} implies that
\begin{equation*}
V_0=\frac{a_4^2}{4}+\frac{a_4a_5}{2r}+\frac{a_4\bar{\delta}}{2r}+\frac{O(1)}{r^2}.
\end{equation*}
We should mention that $O(1)$ and $o(1)$  only depend  on constants in the  assumptions of  Theorem \ref{Mainthm1}, not depend  on $m,t$.

Let $q_2=-\frac{a_4^2}{4}-\frac{a_4a_5}{2r}-V_2$.  By \eqref{Gpartial}, we have
\begin{eqnarray}
  \label{Gpartialcase1} \frac{\partial F(m,r,t,s)}{\partial r} 
   &=& r^{s-1}\int_{S_r}[( r(\nabla dr) -(\frac{s}{2 }+\frac{1}{2}\bar{\delta})\hat{g})(\nabla v_m,\nabla v_m)]e^{-2\rho}dx\\
 \label{Gpartialcase2}  &&+r^{s-1}\int_{S_r}[2m -\frac{\bar{\delta}}{2}+\frac{s}{2}]|\frac{\partial v_m}{\partial r}|^2e^{-2\rho}dx\\
  \label{Gpartialcase3} &&+r^{s-1}\int_{S_r}[\bar{a}_1+\frac{\bar{\delta} a_4}{2}-t+\bar{\delta}\frac{m} {r}+o(1)]\frac{\partial v_m}{\partial r}v_me^{-2\rho}dx\\
  \label{Gpartialcase4} && +r^{s-1}\int_{S_r}[(\lambda-\frac{a_4^2}{4})(\frac{s}{2}+\frac{\bar{\delta}}{2})-\frac{\bar{a}_2}{2}+o(1)]v_m^2e^{-2\rho}dx\\
   \label{Gpartialcase5}&&+r^{s-1}\int_{S_r} \frac{m(m+1)}{r^2}[ \frac{s-2}{2}+\frac{\bar{\delta}}{2}]v_m^2 e^{-2\rho}dx
\end{eqnarray}

\begin{theorem}\label{Thmpositive}
Under the assumptions  of Theorem \ref{Mainthm1},
 there exist $s_0,R_0,m_0>0$ such that for $m\geq m_0$ and $r\geq R_0$,
 \begin{equation*}
    \frac{\partial F(m,r,t,s_0)}{\partial r} >0.
 \end{equation*}
\end{theorem}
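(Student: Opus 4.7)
The plan is to pick $s_0$ in an interval dictated by \eqref{Gcons} and then verify positivity of $\partial F/\partial r$ by analyzing each of the five summands \eqref{Gpartialcase1}--\eqref{Gpartialcase5}, using Cauchy--Schwarz to subdue the cross term \eqref{Gpartialcase3}.

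I begin by fixing $s_0$ in the non-empty interval $\bigl(\max\{\mu,\,2+\delta\},\,2a_3-\delta\bigr)$, open thanks to $a_3>1+\delta$ and $2a_3>\mu+\delta$, taking $s_0$ as close to $\mu$ as required. With this choice, the curvature term \eqref{Gpartialcase1} is non-negative for $r\geq R_0$ because the hypothesis $\liminf[r\nabla dr-a_3\hat g]\geq 0$ combined with $|\bar\delta|\leq\delta+o(1)$ and $s_0\leq 2a_3-\delta$ forces $r\nabla dr-\bigl(\tfrac{s_0+\bar\delta}{2}\bigr)\hat g\geq 0$ on $\hat g$. Similarly \eqref{Gpartialcase5} is non-negative since $s_0>2+\delta$ makes $\tfrac{s_0-2+\bar\delta}{2}\geq 0$ asymptotically. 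The kinetic term \eqref{Gpartialcase2} has coefficient $\geq 2m-\delta/2+s_0/2$, essentially $2m$ for $m\geq m_0$.

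The critical step is to estimate the cross term \eqref{Gpartialcase3}. I split its coefficient as $c_1(r)+\bar\delta\cdot\tfrac{m}{r}$ with $|c_1|\leq 2a_1+\delta a_4+o(1)$, and apply weighted Cauchy--Schwarz twice:
\[
|c_1\,\partial_r v_m\, v_m|\leq 2(\mu+\delta)(\partial_r v_m)^2+\tfrac{c_1^2}{8(\mu+\delta)}v_m^2,
\]
the weight $2(\mu+\delta)$ chosen precisely so that the $v_m^2$ coefficient matches the summand $\tfrac{(2a_1+\delta a_4)^2}{8(\mu+\delta)}$ appearing (after simple rearrangement) in the first branch of \eqref{Gconl}, and
\[
\bigl|\bar\delta\tfrac{m}{r}\,\partial_r v_m\, v_m\bigr|\leq(\partial_r v_m)^2+\tfrac{\delta^2 m^2}{4r^2}v_m^2.
\]
The $(\partial_r v_m)^2$ contributions $2(\mu+\delta)+1$ are absorbed into \eqref{Gpartialcase2}, and the $O(m^2/r^2)v_m^2$ piece is absorbed into \eqref{Gpartialcase5} whose integrand dominates $\tfrac{m(m+1)(s_0-2-\delta)}{2r^2}v_m^2$, provided $s_0>2+\delta+\tfrac{\delta^2}{2}$ (still in the allowed range).

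After these absorptions, the residual coefficient of $v_m^2$ is at least $(\lambda-\tfrac{a_4^2}{4})\tfrac{s_0-\delta}{2}-\tfrac{a_2}{2}-\tfrac{(2a_1+\delta a_4)^2}{8(\mu+\delta)}+o(1)$, and the first case of \eqref{Gconl}, multiplied through by $(\mu-\delta)/2$, reads $(\lambda-\tfrac{a_4^2}{4})\tfrac{\mu-\delta}{2}>\tfrac{a_2}{2}+\tfrac{(2a_1+\delta a_4)^2}{8(\mu+\delta)}$; continuity then gives strict positivity for $s_0$ close enough to $\mu$, so $\partial F/\partial r>0$. The second branch of \eqref{Gconl} corresponds to an alternative absorption in which the $(\partial_r v_m)^2$ debt is paid by the curvature form \eqref{Gpartialcase1} using the budget $a_3$ rather than by \eqref{Gpartialcase2} using the budget $m$, and either branch suffices. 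The main technical obstacle is the balanced weight selection in Cauchy--Schwarz, especially the scaling for the $\bar\delta m/r$ term, which must be tuned so that its $v_m^2$ output lands in \eqref{Gpartialcase5} without overdrawing the $(\partial_r v_m)^2$ budget of \eqref{Gpartialcase2}; the precise numerical weights above are exactly the ones for which the hypothesis \eqref{Gconl} is sharp.
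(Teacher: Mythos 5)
Your overall strategy is the same as the paper's: keep the five summands \eqref{Gpartialcase1}--\eqref{Gpartialcase5}, check that \eqref{Gpartialcase1} and \eqref{Gpartialcase5} are non-negative for an admissible $s_0$, and kill the cross term \eqref{Gpartialcase3} by Cauchy--Schwarz against \eqref{Gpartialcase2}, \eqref{Gpartialcase4}, \eqref{Gpartialcase5}. The one real difference in route is which branch of \eqref{Gconl} you invoke: you place $s_0$ near the lower end of its range and use the first branch (multiplied by $(\mu-\delta)/2$), whereas the paper takes $s_0$ close to $2a_3-\delta$ and uses only the second branch $\lambda>\frac{a_4^2}{4}+\frac{a_2}{2(a_3-\delta)}$, reserving the first branch for the $m=0$ statement (Theorem \ref{Thmpositivezero}). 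Since \eqref{Gconl} is a $\max$, both branches are available and either works here; your remark that the second branch comes from paying the derivative debt out of \eqref{Gpartialcase1} is not how the paper uses it, but that is immaterial. Also note that ``taking $s_0$ as close to $\mu$ as required'' is impossible when $\mu<2+\delta$ (your own interval forces $s_0>2+\delta$); fortunately it is also unnecessary, since $s_0>\mu$ only improves the coefficient $(\lambda-\frac{a_4^2}{4})\frac{s_0-\delta}{2}$.

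There is, however, one step that genuinely fails for some admissible parameters: your fixed-weight estimate $|\bar\delta\tfrac{m}{r}\partial_r v_m\,v_m|\le(\partial_r v_m)^2+\tfrac{\delta^2m^2}{4r^2}v_m^2$ forces you to require $s_0>2+\delta+\tfrac{\delta^2}{2}$ so that \eqref{Gpartialcase5} can absorb the $\tfrac{\delta^2m^2}{4r^2}v_m^2$ output, and you assert this is ``still in the allowed range.'' It need not be: the upper limit is $2a_3-\delta$, and $2a_3-\delta>2+\delta+\tfrac{\delta^2}{2}$ amounts to $a_3>1+\delta+\tfrac{\delta^2}{4}$, which is strictly stronger than the hypothesis $a_3>1+\delta$ of \eqref{Gcons} (e.g.\ $\delta=1$, $a_3=2.1$ leaves no admissible $s_0$). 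The repair is to let the weight grow with $m$: estimate $|\bar\delta\tfrac{m}{r}\partial_r v_m\,v_m|\le \epsilon m(\partial_r v_m)^2+\tfrac{\delta^2 m}{4\epsilon r^2}v_m^2$ with $\epsilon$ small and fixed, so the derivative debt $\epsilon m$ is still covered by the $2m$ in \eqref{Gpartialcase2} and the $v_m^2$ output is only $O(m/r^2)$, which \eqref{Gpartialcase5} dominates for $m\ge m_0$ under the weaker condition $s_0>2+\delta$. This is exactly what the paper's single discriminant inequality ``for large $m$ and $r$'' accomplishes: the product of the $v_m^2$ and $|\partial_r v_m|^2$ coefficients grows like $m^3/r^2$, beating the $m^2/r^2$ of the squared cross coefficient once $m_0$ is large enough. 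With that adjustment your argument closes.
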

\begin{proof}
Let  $s_0$ be such that  $s_0<2a_3-\delta$ and sufficiently close to $2a_3-\delta $.
By the assumption $a_3>1+\delta$ (see \eqref{Gcons}), one has
\begin{equation*}
     r(\nabla dr) -(\frac{s_0}{2 }+\frac{1}{2}\bar{\delta})\hat{g} \geq 0,
\end{equation*}
for large $r$, which implies
\begin{equation}\label{Gpartial2positive}
    \eqref{Gpartialcase1}>0.
\end{equation}
By assumption $a_3>1+\delta$ (see \eqref{Gcons}) again and $\lambda>\frac{a_4^2}{4}+\frac{a_2}{2(a_3-\delta)}$ (see \eqref{Gconl}) , one has
\begin{equation*}
    (\lambda-\frac{a_4^2}{4})(\frac{s_0}{2}+\frac{\bar{\delta}}{2})-\frac{\bar{a}_2}{2}>0,
\end{equation*}
and
\begin{equation*}
\frac{s_0-2}{2}+\frac{\bar{\delta}}{2}>0.
\end{equation*}

By Cauchy-Schwartz inequality, one has
\begin{equation*}
     [\frac{m(m+1)}{r^2} (\frac{s_0-2}{2}+\frac{\bar{\delta}}{2})+(\lambda-\frac{a_4^2}{4})(\frac{s_0}{2}+\frac{\bar{\delta}}{2})-\frac{\bar{a}_2}{2}]v_m^2+[2m -\frac{\bar{\delta}}{2}+\frac{s_0}{2}]|\frac{\partial v_m}{\partial r}|^2>|[\frac{\bar{a}_1}{2}+\frac{\bar{\delta} a_4}{2}-t+\bar{\delta}\frac{m} {r}]\frac{\partial v_m}{\partial r}v_m|
\end{equation*}
 for large $m$ and $r$. Thus, one has
 \begin{equation}\label{Gpartial5positive}
    |\eqref{Gpartialcase3}|<\eqref{Gpartialcase2}+\eqref{Gpartialcase4}+\eqref{Gpartialcase5}
 \end{equation}
for large $m$ and $r$.

By \eqref{Gpartial2positive} and \eqref{Gpartial5positive}, we  obtain Theorem \ref{Thmpositive}.
\end{proof}
\begin{theorem}\label{Thmpositivezero}
Let  $s<\mu$ and $s$ be sufficiently close to $\mu$. Then
under the conditions of Theorem \ref{Mainthm1},
 we have
 \begin{equation*}
    \frac{\partial F(0,r,0,s)}{\partial r} >0,
 \end{equation*}
 for large $r$.
\end{theorem}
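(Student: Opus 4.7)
The plan is to specialize the identity \eqref{Gpartial} (as already rewritten in \eqref{Gpartialcase1}--\eqref{Gpartialcase5}) at $m=0$ and $t=0$. The contribution \eqref{Gpartialcase5} carries the factor $m(m+1)$ and therefore vanishes identically, so only four integrals remain. I choose $s<\mu$ close enough to $\mu$ that the strict inequalities in \eqref{Gcons} and \eqref{Gconl} are preserved when $\mu$ is replaced by $s$; such $s$ exists by continuity.

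For the curvature integrand in \eqref{Gpartialcase1}, the condition $2a_3>\mu+\delta$ yields $a_3>(s+\delta)/2$, and combining $\liminf[r\nabla dr-a_3\hat g]\geq 0$ with $\limsup|\bar{\delta}|\leq\delta$ gives $r\nabla dr-\bigl(\tfrac{s}{2}+\tfrac{\bar{\delta}}{2}\bigr)\hat g\geq 0$ on $S_r$ for all large $r$, so \eqref{Gpartialcase1} is nonnegative. The sum \eqref{Gpartialcase2}+\eqref{Gpartialcase3}+\eqref{Gpartialcase4} is the pointwise quadratic form $A\bigl|\tfrac{\partial v_0}{\partial r}\bigr|^2+C\tfrac{\partial v_0}{\partial r}v_0+Bv_0^2$ with
\begin{equation*}
A=\frac{s-\bar{\delta}}{2},\quad B=\Bigl(\lambda-\frac{a_4^2}{4}\Bigr)\frac{s+\bar{\delta}}{2}-\frac{\bar{a}_2}{2}+o(1),\quad C=\bar{a}_1+\frac{\bar{\delta} a_4}{2}+o(1),
\end{equation*}
and the key algebraic step is to verify $A>0$, $B>0$, and $4AB-C^2>0$ uniformly for large $r$.

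Using the worst-case bounds $|\bar{\delta}|\leq\delta$, $|\bar{a}_1|\leq a_1$, $|\bar{a}_2|\leq a_2$ (up to $o(1)$), a direct computation yields
\begin{equation*}
4AB-C^2\geq\Bigl(\lambda-\frac{a_4^2}{4}\Bigr)(s^2-\delta^2)-(s+\delta)a_2-\frac{1}{4}(2a_1+\delta a_4)^2-o(1),
\end{equation*}
and dividing by $s+\delta$ reproduces the inequality $\lambda>\tfrac{a_4^2}{4}+\tfrac{a_2}{s-\delta}+\tfrac{(2a_1+\delta a_4)^2}{4(s^2-\delta^2)}$, which at $s=\mu$ is precisely the first entry of the max in \eqref{Gconl}; by strictness and continuity it remains valid for $s$ close enough to $\mu$. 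The positivity $B>0$ falls out along the way from $\lambda-a_4^2/4>a_2/(\mu-\delta)$. The integrand of $\partial F(0,r,0,s)/\partial r$ is then bounded below by $\kappa(v_0^2+|\tfrac{\partial v_0}{\partial r}|^2)$ for some $\kappa>0$, and the integral is strictly positive since both $v_0$ and $\tfrac{\partial v_0}{\partial r}$ cannot vanish simultaneously on any sphere $S_r$ --- otherwise unique continuation applied to \eqref{equav1} would force $u\equiv 0$, contradicting $u\neq 0$. The main obstacle is the sign bookkeeping of $\bar{\delta}$: the coefficient $A$ prefers $\bar{\delta}\to-\delta$, the coefficient $B$ prefers $\bar{\delta}\to+\delta$, and $|C|^2$ is maximized at $|\bar{\delta}|=\delta$; checking that the single sharp threshold in \eqref{Gconl} dominates all of these competing worst cases at once is what explains the precise shape of that hypothesis.
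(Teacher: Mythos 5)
Your proposal is correct and follows essentially the same route as the paper: specialize \eqref{Gpartialcase1}--\eqref{Gpartialcase5} to $m=0$, $t=0$, use $2a_3>\mu+\delta$ to make the Hessian term nonnegative, and reduce the remaining quadratic form in $(v,\tfrac{\partial v}{\partial r})$ to the discriminant condition $4AB>C^2$, which is exactly the paper's inequality \eqref{Gsolveeq} and leads to the first entry of the max in \eqref{Gconl}. Your explicit worst-case bookkeeping of the sign of $\bar{\delta}$ is just a more detailed version of the paper's step from \eqref{Gsolveeq1} to \eqref{Gsolveeq2}.
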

\begin{proof}
Let $m=0, t=0$ in \eqref{Gpartialcase1}-\eqref{Gpartialcase5}, one has
\begin{eqnarray*}
   \frac{\partial F(0,r,0,s)}{\partial r}
   &=&\int_{S_r} r^{s-1}[ r(\nabla dr) -(\frac{s}{2 }+\frac{1}{2}\bar{\delta})\hat{g})(\nabla v,\nabla v)]e^{-2\rho}dx\\
   &&+r^{s-1}\int_{S_r}[ -\frac{\bar{\delta}}{2}+\frac{s}{2}]|\frac{\partial v}{\partial r}|^2e^{-2\rho}dx\\
   &&+r^{s-1}\int_{S_r}[\bar{a}_1+\frac{\bar{\delta} a_4}{2}+o(1)]\frac{\partial v}{\partial r}v e^{-2\rho}dx\\
   && +r^{s-1}\int_{S_r}[(\lambda-\frac{a_4^2}{4})(\frac{s}{2}+\frac{\bar{\delta}}{2})-\frac{\bar{a}_2}{2}+o(1)]v^2 e^{-2\rho}dx.
\end{eqnarray*}
We will show  that for large $r$,
\begin{equation*}
    \frac{\partial F(0,r,0,s)}{\partial r}>0.
\end{equation*}
By Cauchy Schwartz inequality, it suffices to prove
\begin{equation}\label{Gsolveeq}
    4[(\lambda-\frac{a_4^2}{4})(\frac{s}{2}+\frac{\bar{\delta}}{2})-\frac{\bar{a}_2}{2}][-\frac{\bar{\delta}}{2}+\frac{s}{2}]>
    |\bar{a}_1+\frac{\bar{\delta}}{2}a_4|^2.
\end{equation}
Solving  inequality \eqref{Gsolveeq},
we get
\begin{equation}\label{Gsolveeq1}
     \lambda>\frac{a_4^2}{4}+\frac{a_2}{s+\bar{\delta}}+\frac{1}{4}\frac{(2\bar{a}_1+\bar{\delta} a_4)^2}{s^2-\bar{\delta}^2}.
\end{equation}
It is clear that \eqref{Gsolveeq1} holds if
\begin{equation}\label{Gsolveeq2}
     \lambda>\frac{a_4^2}{4}+\frac{a_2}{s- {\delta}}+\frac{1}{4}\frac{( 2{a}_1+{\delta} a_4)^2}{s^2-{\delta}^2},
\end{equation}
which follows from the  assumption \eqref{Gconl} and the fact $s$ is close to $\mu$.
\end{proof}
\begin{theorem}\label{Keythm2}
There exist $m_0\geq 0$ and $R_0>0$ such that
\begin{equation*}
    F(m_0,r,t,0)>0
\end{equation*}
for all $r\geq R_0$.
\end{theorem}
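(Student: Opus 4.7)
The plan starts with the scaling identity $F(m,r,t,s)=r^s F(m,r,t,0)$, which is immediate since both integrals in the definition of $F$ carry a prefactor $r^s$. Combined with Theorem~\ref{Thmpositive}, which supplies $m_1,R_1>0$ such that $\partial_r F(m,r,t,s_0)>0$ for all $m\geq m_1$ and $r\geq R_1$, this reduces the task to producing a single pair $(m_0,R_0)$ with $m_0\geq m_1$, $R_0\geq R_1$, and $F(m_0,R_0,t,0)>0$. Indeed, strict $r$-monotonicity of $F(m_0,\cdot,t,s_0)$ forces $F(m_0,r,t,s_0)>F(m_0,R_0,t,s_0)=R_0^{s_0}F(m_0,R_0,t,0)>0$ for every $r\geq R_0$, and dividing by $r^{s_0}$ gives $F(m_0,r,t,0)>0$ on the same range.

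To produce $R_0$ I will appeal to unique continuation. If $u|_{S_r}\equiv 0$ for every $r$ in some open subinterval of $[R_1,\infty)$, then $u$ would vanish on an open subset of $M\setminus U$, which by elliptic unique continuation would force $u\equiv 0$, contradicting the hypothesis $u\not\equiv 0$. Hence the set $\{r\geq R_1:u|_{S_r}\not\equiv 0\}$ is dense in $[R_1,\infty)$, and I select any $R_0$ from it; at this $R_0$ the numbers $\alpha:=\int_{S_{R_0}}u^2\,dx$ and $\beta:=\int_{S_{R_0}}|\nabla_\omega u|^2\,dx$ are both finite with $\alpha>0$. Next I rewrite $F(m,R_0,t,0)$ in terms of $u$: from $v_m=r^m e^\rho u$ one computes $\frac{\partial v_m}{\partial r}=r^m e^\rho\bigl[(\tfrac{m}{r}+\rho')u+\tfrac{\partial u}{\partial r}\bigr]$, whence $|\frac{\partial v_m}{\partial r}|^2 e^{-2\rho}=r^{2m}\bigl[(\tfrac{m}{r}+\rho')u+\tfrac{\partial u}{\partial r}\bigr]^2$; together with $v_m^2 e^{-2\rho}=r^{2m}u^2$, $|\nabla_\omega v_m|^2 e^{-2\rho}=r^{2m}|\nabla_\omega u|^2$, and $|\nabla v_m|^2=|\frac{\partial v_m}{\partial r}|^2+|\nabla_\omega v_m|^2$, this yields
\begin{equation*}
F(m,R_0,t,0)=\tfrac{R_0^{2m}}{2}\int_{S_{R_0}}\Bigl\{\bigl(\tfrac{m(m+1)}{R_0^2}+q_2+\lambda-\tfrac{t}{R_0}\bigr)u^2+\bigl[(\tfrac{m}{R_0}+\rho')u+\tfrac{\partial u}{\partial r}\bigr]^2-|\nabla_\omega u|^2\Bigr\}dx.
\end{equation*}

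Dropping the non-negative perfect square leaves $F(m,R_0,t,0)\geq\tfrac{R_0^{2m}}{2}\bigl[\bigl(\tfrac{m(m+1)}{R_0^2}+q_2(R_0)+\lambda-\tfrac{t}{R_0}\bigr)\alpha-\beta\bigr]$, a quadratic in $m$ with positive leading coefficient (because $\alpha>0$), hence positive for $m_0$ sufficiently large. The only genuine obstacle is the negative contribution $-|\nabla_\omega u|^2$, which does not shrink as $m$ grows and cannot be handled pointwise. The saving trick is that $|\frac{\partial v_m}{\partial r}|^2 e^{-2\rho}$ emerges as a perfect square in $u$ and $\partial u/\partial r$, releasing no negative $u^2$ piece, so that the $\tfrac{m(m+1)}{R_0^2}u^2$ term is free to dominate $|\nabla_\omega u|^2$ in the integral as $m\to\infty$---the quantitative lower bound $\alpha>0$ supplied by unique continuation being what makes this large-$m$ limit meaningful.
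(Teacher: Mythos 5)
Your proposal is correct and follows essentially the same route as the paper: use unique continuation to find a sphere $S_{R_0}$ with $\int_{S_{R_0}}u^2\,dx>0$, take $m_0$ large so that the $\tfrac{m(m+1)}{R_0^2}$ term dominates the (fixed, finite) $-|\nabla_\omega u|^2$ and $q_2$ contributions, and then propagate positivity forward in $r$ via the monotonicity from Theorem \ref{Thmpositive} together with the scaling $F(m,r,t,s)=r^sF(m,r,t,0)$. Your completion of the square in $\partial v_m/\partial r$ is just a cleaner form of the paper's Cauchy--Schwarz handling of the cross term $2\tfrac{m}{r}v\,\partial_r v$, and your explicit use of the scaling identity fills in a step the paper leaves implicit.
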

\begin{proof}
It is easy  to check that
\begin{eqnarray*}
  F(m,r,t,0) &=& \int_{S_r}[\frac{1}{2}(\frac{m(m+1)}{r^2}-\frac{t}{r}+q_2+\lambda)v_m^2 +(|\frac{\partial v_m}{\partial r}|^2-\frac{1}{2}|\nabla v_m|^2)]e^{-2\rho}dx\\
   &=& \frac{r^{2m}}{2}\int_{S_r} [(\frac{2m^2+m}{r^2}-\frac{t}{r}+q_2+\lambda)v^2+|\frac{\partial v}{\partial r}|^2 +2\frac{m}{r}\frac{\partial v}{\partial r}v-|\nabla_{\omega} v|^2]e^{-2\rho}dx.
\end{eqnarray*}
By unique continuation theorem and fact that  $u$ is nonzero, there exists large enough    $R_0$ such that   $ \int_{S_{R_0}} v^2 e^{-2\rho}dx \neq 0$. Let $m_0$ be large enough so that
\begin{equation*}
    F(m_0,R_0,t,0)>0.
\end{equation*}
By Theorem \ref{Thmpositive}, we get
\begin{equation*}
    F(m_0,r,t,0)>0,
\end{equation*}
for all $r>R_0$.

\end{proof}
\begin{theorem}\label{Keythm3}
Assume that  $ \int_{S_r} v^2 e^{-2\rho}dx $ is not monotone increasing (with respect to  $r$) in any semi-infinite interval $r\geq R$.
Then there exists a sequence $r_n$ goes to infinity such that
\begin{equation*}
    F(0,0,r_n,0)>0,
\end{equation*}
\end{theorem}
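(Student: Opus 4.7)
The strategy is to locate a sequence of local maxima $r_n \to \infty$ of $\phi(r) := \int_{S_r} v^2 e^{-2\rho}\, dx$ (which must exist in some form, since $\phi$ is not monotone increasing in any tail) and to verify $F(0,r_n,0,0) > 0$ by direct computation, exploiting the two conditions $\phi'(r_n) = 0$ and $\phi''(r_n) \leq 0$.

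To this end I would first establish, using Lemma~\ref{Keylemma2}, the identity
\[
\phi'(r) = 2\int_{S_r} v v_r\, e^{-2\rho}\, dx + \int_{S_r} v^2\, \tfrac{\bar\delta}{r}\, e^{-2\rho}\, dx,
\]
and, by differentiating once more, using the eigen-equation~\eqref{equav} to eliminate $v_{rr}$ and integrating by parts on the sphere (giving $-\int_{S_r} v A_r v \, e^{-2\rho}\, dx = \int_{S_r}|\nabla_\omega v|^2 e^{-2\rho}\, dx$, valid since the weight depends only on $r$),
\[
\phi''(r) = 2 I(r) + 2 \int_{S_r}(V+V_0-\lambda) v^2 e^{-2\rho}\, dx + O(\phi(r)/r),
\]
where $I(r) = \int_{S_r}|\nabla v|^2 e^{-2\rho}\, dx$. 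Since $V + V_0 - \lambda = -(\lambda - a_4^2/4) + o(1)$ and $q_2 + \lambda = (\lambda - a_4^2/4) + o(1)$ as $r \to \infty$, the condition $\phi''(r_n) \leq 0$ gives $I(r_n) \leq (\lambda - a_4^2/4)\phi(r_n) + o(1)\phi(r_n)$. Substituting this into
\[
F(0,r_n,0,0) = \tfrac{1}{2}\int_{S_{r_n}}(q_2+\lambda) v^2 e^{-2\rho}\, dx + J(r_n) - \tfrac{1}{2} I(r_n),
\]
the two copies of $\tfrac{1}{2}(\lambda - a_4^2/4)\phi(r_n)$ cancel, leaving
\[
F(0,r_n,0,0) \geq J(r_n) + o(1)\phi(r_n).
\]

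The principal obstacle is that this inequality guarantees $F > 0$ only when $J(r_n)$ is bounded below by a fixed multiple of $\phi(r_n)$; handling a possible degenerate sequence with $J(r_n) = o(\phi(r_n))$ is the main technical hurdle. I would address it via the unique continuation theorem: if $v_r$ became negligible in $L^2(S_{r_n})$, then $v|_{S_{r_n}}$ would approximately solve a pure spherical eigenproblem tied to $\lambda - V - V_0$, which is non-generic and can be ruled out by perturbing $r_n$ slightly to a nearby test point that retains $\phi' \leq 0$ and $\phi'' \leq 0$ but has a definite lower bound on $J$. In the fallback case where $\phi$ has no local maxima on any tail, $\phi$ must be eventually monotone decreasing, and an analogous argument at points with $\phi'(r) \leq 0$ and $\phi''(r) \leq 0$ (which are plentiful by elementary calculus) yields the same conclusion.
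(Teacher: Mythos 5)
Your proposal has genuine gaps, and it also misses the mechanism the paper's proof actually runs on. Write $\phi(r):=\int_{S_r}v^2e^{-2\rho}dx$. First, your formula for $\phi''$ is not available under the hypotheses of Theorem~\ref{Mainthm1}: differentiating $\phi'(r)=\int_{S_r}[2vv_r+(\Delta r-2\rho')v^2]e^{-2\rho}dx$ once more via Lemma~\ref{Keylemma2} produces the term $\int_{S_r}\frac{\partial}{\partial r}(\Delta r-2\rho')\,v^2e^{-2\rho}dx$, i.e.\ it requires control of $\frac{\partial \Delta r}{\partial r}$, which the Basic version deliberately does not assume --- that is precisely what separates Theorem~\ref{Mainthm1} from the Gradient version. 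Second, even granting the formula, your argument only yields $F(0,r_n,0,0)\geq J(r_n)-o(1)\phi(r_n)$, and the degenerate case $J(r_n)=o(\phi(r_n))$ is not actually disposed of: ``perturbing $r_n$ slightly'' and appealing to unique continuation is a sketch of a hope, not an argument. Moreover, your fallback claim that an eventually decreasing $\phi$ has plentiful points with $\phi'\leq 0$ and $\phi''\leq 0$ is false: $\phi(r)=1+e^{-r}$ is decreasing and strictly convex, so no such points exist.

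The paper needs only first-order information and no case analysis. At points $r_n\to\infty$ where $\phi'(r_n)<0$ (which exist by the non-monotonicity hypothesis), Lemma~\ref{Keylemma2} gives \eqref{Gdecrease}, and completing the square in $\frac{\partial v}{\partial r}+\frac{m_0}{r}v$ inside $F(m_0,r,t,0)$ yields the algebraic identity
\begin{equation*}
F(m_0,r,t,0)=r^{2m_0}F(0,r,0,0)+\frac{m_0r^{2m_0}}{2r}\,\phi'(r)
+\frac{r^{2m_0}}{2}\int_{S_r}\Big[\frac{2m_0^2+m_0}{r^2}-\frac{t}{r}-\frac{m_0}{r}(\Delta r-2\rho')\Big]v^2e^{-2\rho}dx,
\end{equation*}
in which the last integral is negative for large $r$ because $t>0$ dominates the $O(r^{-2})$ terms, and the middle term is negative at $r=r_n$. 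Hence $r_n^{2m_0}F(0,r_n,0,0)>F(m_0,r_n,t,0)>0$ by Theorem~\ref{Keythm2}. The engine of the proof is the already-established positivity and monotonicity of the large-$m$ energy, not a second-derivative test at local maxima; your proposal never invokes Theorem~\ref{Keythm2} and therefore has no substitute for this step.
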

\begin{proof}
By the assumption, there exists a sequence $r_n$ goes to infinity and
such that
\begin{equation*}
     \frac{\partial}{\partial r}\int_{S_r} v^2 e^{-2\rho}dx <0
\end{equation*}
for  $r=r_n$.
By Lemma \ref{Keylemma2}, one has
\begin{equation}\label{Gdecrease}
    \int_{S_r} [2v\frac{\partial v}{\partial r} +(\Delta r-2\rho^{\prime})v^2]e^{-2\rho}dx <0
\end{equation}
for   $r=r_n$.
By some direct computation, we have
\begin{eqnarray}
\nonumber  F(m_0,r,t,0) &=& \int_{S_{r}}[\frac{1}{2}(\frac{m_0(m_0+1)}{r^2}-\frac{t}{r}+q_2+\lambda)v_{m_0}^2+(|\frac{\partial v_{m_0}}{\partial r}|^2-\frac{1}{2}|\nabla v_{m_0}|^2)]e^{-2\rho}dx\\
 \nonumber  &=& \frac{r^{2m_0}}{2}\int_{S_{r}} [(\frac{m_0(m_0+1)}{r^2}-\frac{t}{r}+q_2+\lambda)v^2+|\frac{\partial v}{\partial r}+\frac{m_0}{r}v|^2-|\nabla_{\omega} v|^2]e^{-2\rho}dx\\
  \nonumber &=& \frac{r^{2m_0}}{2}\int_{S_{r}} [(q_2+\lambda)v^2+|\frac{\partial v}{\partial r}|^2-|\nabla_{\omega} v|^2]e^{-2\rho}dx\\
  \nonumber && +\frac{r^{2m_0}}{2}\int_{S_{r}} [(\frac{2m_0^2+m_0}{r^2}-\frac{t}{r})v^2+2\frac{m_0}{r}\frac{\partial v}{\partial r}v]e^{-2\rho}dx\\
   \nonumber &=& \frac{r^{2m_0}}{2}\int_{S_{r}} [(q_2+\lambda)v^2+|\frac{\partial v}{\partial r}|^2-|\nabla_{\omega} v|^2]e^{-2\rho}dx\\
  \nonumber && +\frac{m_0r^{2m_0}}{2r}\int_{S_{r}} [2v\frac{\partial v}{\partial r} +(\Delta r-2\rho^{\prime})v^2]e^{-2\rho}dx\\
 \nonumber  && +\frac{r^{2m_0}}{2}\int_{S_{r}} [\frac{2m_0^2+m_0}{r^2}-\frac{t}{r}-\frac{m_0}{r}(\Delta r-2\rho^{\prime}) ]v^2e^{-2\rho}dx\\
 &=& r^{2m_0}F(0,0,r,0)\label{Gdecrease1}\\
  \nonumber && +\frac{m_0r^{2m_0}}{2r}\int_{S_{r}} [2v\frac{\partial v}{\partial r} +(\Delta r-2\rho^{\prime})v^2]e^{-2\rho}dx\\
 \nonumber  && +\frac{r^{2m_0}}{2}\int_{S_{r}} [\frac{2m_0^2+m_0}{r^2}-\frac{t}{r}-\frac{m_0}{r}(\Delta r-2\rho^{\prime}) ]v^2e^{-2\rho}dx.
\end{eqnarray}
For large $r$, one has
\begin{equation*}
    \frac{r^{2m_0}}{2}\int_{S_ R} [\frac{2m_0^2+m_0}{r^2}-\frac{t}{r}-\frac{m_0}{r}(\Delta r-2\rho^{\prime}) ]v^2e^{-2\rho}dx<0,
\end{equation*}
since $t>0$.

Combing with \eqref{Gdecrease} and \eqref{Gdecrease1},
one has
\begin{equation*}
    F(m_0,r_n,t,0)<F(0,r_n,0,0).
\end{equation*}

By  Theorem \ref{Keythm2}, we have $ F(m_0,r_n,t,0) >0$. Thus
we get
\begin{equation*}
  F(0,0,r_n,0) >0.
\end{equation*}
\end{proof}
{\textbf{Proof of Theorem \ref{Mainthm1}}.}
\begin{proof}
It suffices to assume  $ \int_{S_r} v^2 e^{-2\rho}dx $ is not monotone increasing (with respect to  $r$) in any semi-infinite interval $r\geq R$.

By Theorems  \ref{Thmpositivezero} and \ref{Keythm3}, there exits $\gamma>0$ such that
\begin{equation*}
    F(0,r,0,s)\geq\gamma,
\end{equation*}
for large $r$.
Thus
\begin{equation*}
    r^s\int_{S_r}(v^2+|\frac{\partial v}{\partial r}|^2)e^{-2\rho}dx\geq \gamma_0
\end{equation*}
for some $\gamma_0>0$.

By the fact  $v=e^{\rho}u$, we get that
\begin{equation*}
   \liminf_{r\to \infty} r^{s} [M(r)^2+N(r)^2]>0 .
\end{equation*}
Recalling that   $s<\mu$,  we have
\begin{equation*}
   \liminf_{r\to \infty} r^{\mu} [M(r)^2+N(r)^2]=\infty .
\end{equation*}
\end{proof}

{\textbf{Proof of Corollary \ref{Cor1}}}
\begin{proof}
The  Corollary \ref{Cor1}  follows from Theorem \ref{Mainthm1} directly.
\end{proof}
{\textbf{Proof of Corollary \ref{Cor2}}}
\begin{proof}

The  Corollary \ref{Cor2}  follows from Theorem \ref{Mainthm1} and the fact that $\Delta r$ is the trace of $\nabla dr$.
\end{proof}
Before we finish the proof of  Corollary \ref{Cor3}, a lemma is necessary.

\begin{lemma}\label{Lekey3}
Suppose there exists a   $r_0>0$  such that
\begin{equation*}
    \nabla dr\geq 0
\end{equation*}
for $r=r_0$,
and
\begin{equation*}
     -1-  \frac{2A}{r}\leq K_{\rm rad}(r)\leq-1+ \frac{2A}{r}<0,
\end{equation*}
for $r\geq r_0$, where $K_{\rm rad}(r)$ is the radial curvature.

Then
\begin{equation}\label{Gequ4new}
       |\nabla dr  -\hat{g}|\leq \frac{A+o(1)}{r},
\end{equation}
and
\begin{equation}\label{Gequ5new}
       |\frac{\partial \Delta r}{\partial r} |\leq \frac{4(n-1)A+o(1)}{r}.
\end{equation}

\end{lemma}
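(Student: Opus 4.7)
The proof is based on the radial Riccati equation for the shape operator of the level sets of $r$ combined with scalar Riccati comparison. Let $S(r)$ denote the self-adjoint endomorphism of $\partial_r^\perp$ determined by $g(SX,Y)=(\nabla dr)(X,Y)$; note that $S(r_0)\geq 0$ by hypothesis. A standard differentiation along a radial geodesic, using $\nabla_{\partial_r}\partial_r = 0$ and the definition of the curvature tensor, gives the operator Riccati identity
\begin{equation*}
S' + S^2 + R_{\partial_r} = 0,
\end{equation*}
where $R_{\partial_r}(X)=R(X,\partial_r)\partial_r$ has eigenvalues equal to the radial sectional curvatures $K_{\rm rad}$, which by hypothesis lie in $[-1-2A/r,\,-1+2A/r]$. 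Writing $R_{\partial_r}=-I+E$ with operator norm $\|E\|\leq 2A/r$ for $r\geq r_0$, the equation becomes $S' = I - S^2 - E$.

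To prove \eqref{Gequ4new}, I would compare $S$ with the solutions $\mu_\pm$ of the model scalar Riccati equations $\mu_\pm' + \mu_\pm^2 = 1\pm 2A/r$. An elementary asymptotic analysis (the ansatz $\mu(r)=1+c/r+o(1/r)$ forces $c=\pm A$) yields $\mu_\pm(r)=1\pm A/r + o(1/r)$ as $r\to\infty$ on the solution branch satisfying $\mu(r_0)\geq 0$. For the operator bound, a parallel-frame/min-max argument on the extremal eigenvalues $\lambda_{\max}(r)$ and $\lambda_{\min}(r)$ yields differential inequalities $\lambda_{\max}'+\lambda_{\max}^2\leq 1+2A/r$ and $\lambda_{\min}'+\lambda_{\min}^2\geq 1-2A/r$, using $\langle S^2 X,X\rangle=|SX|^2\geq \langle SX,X\rangle^2$ at extremal unit directions $X$. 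The initial bound $S(r_0)\geq 0$ selects the stable branches of the model flow, and standard scalar Riccati comparison gives $\mu_-(r)\leq \lambda_{\min}(r)\leq \lambda_{\max}(r)\leq \mu_+(r)$ for large $r$. Hence $|S-I|\leq (A+o(1))/r$, which is \eqref{Gequ4new}.

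For \eqref{Gequ5new}, I would take the trace of the Riccati equation:
\begin{equation*}
\tfrac{\partial}{\partial r}\Delta r + |\nabla dr|^2 + \mathrm{Ric}(\partial_r,\partial_r) = 0.
\end{equation*}
Writing the eigenvalues of $S$ as $1+\epsilon_i$ with $|\epsilon_i|\leq (A+o(1))/r$ from the previous step, $|\nabla dr|^2=\sum_i(1+\epsilon_i)^2 = (n-1)+2\sum_i\epsilon_i+O(1/r^2) = 2\Delta r-(n-1)+O(1/r^2)$. Summing the sectional curvature bound over an orthonormal frame perpendicular to $\partial_r$ yields $\mathrm{Ric}(\partial_r,\partial_r) = -(n-1)+\eta$ with $|\eta|\leq 2(n-1)A/r$. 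Substituting,
\begin{equation*}
\tfrac{\partial}{\partial r}\Delta r = -2\bigl(\Delta r-(n-1)\bigr) - \eta + O(1/r^2),
\end{equation*}
and since $|\Delta r-(n-1)|=|\sum_i\epsilon_i|\leq (n-1)(A+o(1))/r$, we obtain $|\tfrac{\partial}{\partial r}\Delta r|\leq 4(n-1)A/r+o(1/r)$, which is \eqref{Gequ5new}.

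The hard step is the matrix Riccati comparison: the off-diagonal quadratic coupling $\sum_{k\neq i} s_{ik}^2$ in the component equations obstructs naive entrywise comparison. The way around it is to work with only the extremal eigenvalues $\lambda_{\max}$ and $\lambda_{\min}$ via min-max, where the inequality $|SX|^2\geq \langle SX,X\rangle^2$ is sharp precisely on eigenvectors and restores a clean scalar Riccati ODI. The initial positivity $\nabla dr\geq 0$ at $r_0$ is essential here: without it one cannot rule out the unstable branch of the model Riccati flow (which blows up in finite distance), and the comparison fails.
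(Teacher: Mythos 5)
Your argument is correct and is exactly the route the paper indicates: the paper's own ``proof'' consists of invoking the comparison theorem and the Weitzenb\"ock formula and deferring the details to Kumura, and your Riccati comparison for the shape operator (reduced to scalar ODIs on the extremal eigenvalues) together with the traced Riccati identity is precisely that argument, carried out with the right constants. The only place you are slightly terse is the asymptotics $\mu_\pm(r)=1\pm A/r+o(1/r)$ for the model Riccati solutions, which an ansatz alone does not establish, but this is a routine substitution $\mu=1+w$ followed by integrating $w'\approx -2w+2A/r$, so there is no real gap.
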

\begin{proof}
\eqref{Gequ4new} and \eqref{Gequ5new}  can be proved by comparison  theorem and   Weitzenb\"{o}ck formula. See \cite{kumura2010radial} for details.
\end{proof}
{\textbf{Proof of Corollary \ref{Cor3}}}
\begin{proof}
Under the curvature condition of Corollary \ref{Cor3} and  Lemma \ref{Lekey3}, one has
\begin{equation*}
  \limsup_{r\to \infty}  |\nabla dr  -\hat{g}|\leq \frac{A}{r}.
\end{equation*}
Now  Corollary \ref{Cor3}  follows from Corollary \ref{Cor2} .
\end{proof}
\section{Proof of Theorem \ref{Mainthm2} }

 Let $q_1=\Delta r-2\rho^{\prime}$, $q_2=-\frac{a_4^2}{4}-\frac{a_4\bar{\delta}}{2r}-\frac{a_4a_5}{2r}-V_2$ and the others as the same in \S \ref{proof1}.
 By \eqref{Gpartial} and integration by part, one has
\begin{eqnarray*}
  \frac{\partial F(m,r,t,s)}{\partial r}
   &=& r^{s-1}\int_{S_r}[ (r(\nabla dr) -\frac{s}{2 }\hat{g})(\nabla v_m,\nabla v_m)+\frac{1}{2}r^s v_m <\nabla _{\omega}q_1 ,\nabla _{\omega}v_m>]e^{-2\rho}dx\\
   &&+r^{s-1}\int_{S_r}[2m +\frac{s}{2}]|\frac{\partial v_m}{\partial r}|^2e^{-2\rho}dx\\
   &&+r^{s-1}\int_{S_r}r[q_2-\frac{t}{r}+V_0+V_1+V_2]\frac{\partial v_m}{\partial r}v_m e^{-2\rho}dx\\
   && +r^{s-1}\int_{S_r} [\frac{s}{2}  q_1+\frac{1}{2}r\frac{\partial q_1}{\partial r}+2m q_1]\frac{\partial v_m}{\partial r}v_me^{-2\rho}dx\\
   && +\int_{S_r}[\frac{s-2}{2} r^{s-3}m(m+1)-\frac{s-1}{2}t r^{s-2}+\frac{r^s}{2}\frac{\partial q_2}{\partial r}+\frac{s}{2}r^{s-1}q_2+\lambda\frac{s}{2}r^{s-1}]v_m^2e^{-2\rho}dx\\
   &&+\int_{S_r}\frac{1}{2}r^sq_1[-\frac{t}{r}+\frac{m}{r}q_1+q_2+V_0+V_1+V_2]v_m^2e^{-2\rho}dx.
\end{eqnarray*}
Let
\begin{equation*}
\frac{\partial \bar{\delta} (r)}{\partial r}=\bar{ \delta}_1,   (\nabla-\frac{\partial}{\partial r})\bar{\delta} (r)= \bar{\delta}_2.
\end{equation*}
By the assumptions of Theorem \ref{Mainthm2}, we get
\begin{eqnarray}
 \nonumber \frac{\partial F(m,r,t,s)}{\partial r}
\nonumber   &=& r^{s-1}\int_{S_r}[ (r(\nabla dr) -(\frac{s}{2 })\hat{g})(\nabla v_m,\nabla v_m)+\frac{1}{2}r^{s-1} v_m <\bar{\delta}_2 ,\nabla _{\omega}v_m>]e^{-2\rho}dx\\
\nonumber   &&+r^{s-1}\int_{S_r}[2m +\frac{s}{2} ]|\frac{\partial v_m}{\partial r}|^2e^{-2\rho}dx\\
\nonumber   &&+r^{s-1}\int_{S_r}[-t+\bar{a}_1+\frac{1}{2}\bar{\delta}_1+ 2\bar{\delta}\frac{m}{r}+o(1)]\frac{\partial v_m}{\partial r}v_m e^{-2\rho}dx\\
 \nonumber  && +r^{s-1}\int_{S_r}[  -\frac{a_4\bar{\delta}_1}{4}-\frac{s}{8}a_4^2+\frac{s}{2}\lambda-\frac{\bar{a}_2}{2}
 + o(1)]v_m^2e^{-2\rho}dx\\
  \label{Gsecondnonzerom} &&+r^{s-1}\int_{S_r}[\frac{s-2}{2} \frac{m(m+1)}{r^2}+\frac{\bar{\delta}^2}{2}\frac{m}{r^2}]v_m^2e^{-2\rho}dx
\end{eqnarray}
\begin{theorem}\label{Thmpositivesecond}
Under the conditions  of Theorem \ref{Mainthm2},
 there exist $s_0,R_0,m_0>0$ such that for $m\geq m_0$ and $r\geq R_0$,
 \begin{equation*}
    \frac{\partial F(m,r,t,s_0)}{\partial r} >0.
 \end{equation*}
\end{theorem}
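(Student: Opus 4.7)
The plan is to mimic the strategy of Theorem \ref{Thmpositive}, inspecting the five groups of terms in the formula for $\partial F/\partial r$ just preceding the statement and showing each one is either manifestly positive or is dominated by the positive contributions once $m$ and $r$ are taken large. First I would fix $s_0$ with $s_0<2a_3$ sufficiently close to $2a_3$. The hypothesis $a_3>1$ then delivers two positivity facts for all large $r$: (i) $r(\nabla dr)-\frac{s_0}{2}\hat{g}\geq\eta\,\hat{g}$ for some $\eta>0$, which makes the pure gradient piece of line one strictly positive, and (ii) $\frac{s_0-2}{2}>0$, which makes the $\frac{m(m+1)}{r^2}v_m^2$ coefficient in the last line strictly positive.

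Next I would dispose of the tangential coupling $\frac{1}{2}v_m\langle\bar{\delta}_2,\nabla_\omega v_m\rangle$ appearing in line one. Since $\bar{\delta}_2$ is bounded uniformly in $r$ (by $\delta_2+o(1)$), Cauchy--Schwarz yields
$$\bigl|\tfrac{1}{2}v_m\langle\bar{\delta}_2,\nabla_\omega v_m\rangle\bigr|\ \leq\ \tfrac{\eta}{2}\,|\nabla_\omega v_m|^2\ +\ \tfrac{\delta_2^2+o(1)}{8\eta}\,v_m^2,$$
whose gradient part is absorbed into (i) and whose $v_m^2$ cost is a bounded constant (for fixed $s_0$) that enters the $v_m^2$ coefficient. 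The $\lambda$-threshold in \eqref{Gsecondlam2} is precisely the condition that makes this combined $v_m^2$ coefficient $\bigl[-\tfrac{a_4\bar{\delta}_1}{4}-\tfrac{s_0}{8}a_4^2+\tfrac{s_0}{2}\lambda-\tfrac{\bar{a}_2}{2}+o(1)\bigr]$ strictly positive; indeed the optimal choice of $\eta$ is what dictates the shape of \eqref{Gsecondlam2}.

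The main obstacle is the cross term in line three, which contains the $m$-growing contribution $2\bar{\delta}\frac{m}{r}\frac{\partial v_m}{\partial r}v_m$. I would absorb this by a Cauchy--Schwarz split that uses both the $|{\partial v_m}/{\partial r}|^2$ weight $2m$ from line two and the $\frac{s_0-2}{2}\frac{m(m+1)}{r^2}v_m^2$ weight from line five: for $\epsilon\in(0,2)$,
$$\Bigl|2\bar{\delta}\tfrac{m}{r}\tfrac{\partial v_m}{\partial r}v_m\Bigr|\ \leq\ \epsilon m\,\Bigl|\tfrac{\partial v_m}{\partial r}\Bigr|^2\ +\ \tfrac{\bar{\delta}^2}{\epsilon}\tfrac{m}{r^2}v_m^2,$$
and the right-hand side is controlled by $2m|\partial v_m/\partial r|^2 + \frac{s_0-2}{2}\frac{m(m+1)}{r^2}v_m^2$ once $m$ is sufficiently large (the second term gains an extra factor of $m$ that swamps $\bar{\delta}^2/\epsilon$). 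The remaining bounded $m$-independent cross-coefficient $-t+\bar{a}_1+\tfrac{1}{2}\bar{\delta}_1+o(1)$ is handled exactly as in Theorem \ref{Thmpositive}, balancing the residual $\frac{s_0}{2}|\partial v_m/\partial r|^2$ from line two against the residual $v_m^2$ weight from line four, which is again positive thanks to \eqref{Gsecondlam2}.

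Combining these three estimates gives constants $m_0$ and $R_0$, depending only on the parameters in the hypotheses and on the chosen $s_0$ and $\eta$, for which $\frac{\partial F(m,r,t,s_0)}{\partial r}>0$ whenever $m\geq m_0$ and $r\geq R_0$. The subtle point to verify carefully is that the various $o(1)$ corrections collected from $\bar{\delta}$, $\bar{\delta}_1$, $\bar{\delta}_2$ and $\bar{a}_2$ are uniform in $m$, so that the threshold $R_0$ can be chosen independently of $m$; this parallels the corresponding step in Theorem \ref{Thmpositive} and follows from the fact that $\bar{\delta}_1$ and $\bar{\delta}_2$ enter only through their $L^\infty$-bounds $\delta_1,\delta_2$, not through $m$-dependent norms.
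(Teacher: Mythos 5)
Your overall strategy --- absorbing the tangential term $\frac{1}{2}v_m\langle\bar{\delta}_2,\nabla_\omega v_m\rangle$ into the surplus positivity of $r(\nabla dr)-\frac{s_0}{2}\hat{g}$ by Cauchy--Schwarz, and controlling the $m$-growing cross term $2\bar{\delta}\frac{m}{r}\frac{\partial v_m}{\partial r}v_m$ by the weights $2m$ and $\frac{s_0-2}{2}\frac{m(m+1)}{r^2}$ --- is exactly the paper's. But your choice of $s_0$ is wrong, and when $\delta_2>0$ it breaks the argument. Taking $s_0$ ``sufficiently close to $2a_3$'' forces the surplus $\eta\leq a_3-\frac{s_0}{2}$ to be arbitrarily small, so the price of the absorption, $\frac{\delta_2^2+o(1)}{8\eta}$, blows up; no $\lambda$ satisfying the theorem's hypotheses can then make the residual $v_m^2$ coefficient positive, and since $\frac{m(m+1)}{r^2}\to 0$ as $r\to\infty$ for fixed $m$, the large-$m$ term cannot rescue a negative constant coefficient on the whole half-line $r\geq R_0$. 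The paper instead leaves $s_0\in(2,2a_3)$ free and chooses it as (essentially) the minimizer in hypothesis \eqref{Gsecondlam1}: the absorption cost is $\frac{1}{\sigma}\approx\frac{\delta_2^2}{8(2a_3-s_0)}$, and \eqref{Gsecondlam1} --- note the minimum over $2\leq s_0\leq 2a_3$ and the term $\frac{\delta_2^2}{(8a_3-4s_0)s_0}$ --- is precisely the statement that \emph{some} admissible $s_0$ makes $\frac{s_0}{2}\lambda-\frac{s_0}{8}a_4^2-\frac{a_4\bar{\delta}_1}{4}-\frac{\bar{a}_2}{2}-\frac{1}{\sigma}$ strictly positive. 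There is a genuine trade-off in $s_0$ (larger $s_0$ improves the $\frac{a_2}{s_0}$-type terms but worsens the $\bar{\delta}_2$ cost), and your proposal collapses it to the wrong endpoint.

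Relatedly, you invoke \eqref{Gsecondlam2} twice, but that hypothesis governs the $m=0$, $s$ close to $\mu$ energy of Theorem \ref{Thmpositivezerosecond} ($\mu$ need not be near $2a_3$, nor even exceed $2$). The hypothesis that drives Theorem \ref{Thmpositivesecond} is \eqref{Gsecondlam1}. With $s_0$ re-chosen as above and \eqref{Gsecondlam1} cited in place of \eqref{Gsecondlam2}, the rest of your estimates (the splitting of the cross term, and the uniformity in $m$ of the $o(1)$ corrections) do go through and amount to the paper's single Cauchy--Schwarz inequality \eqref{equ1}.
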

\begin{proof}
Let $2<s_0<2a_3$, which will be specified later.  One has
\begin{equation*}
  | \frac{1}{2} v_m <\bar{\delta}_2 ,\nabla _{\omega}v_m>|\leq \frac{v_m^2}{\sigma}+(a_3-\frac{s_0}{2}+\epsilon(\sigma))|\nabla_{\omega}v_m|^2,
\end{equation*}
where $\sigma$ is any constant such that
\begin{equation*}
    \sigma<\frac{16a_3-8s_0}{|{\delta}_2|^2},
\end{equation*}
and $\epsilon(\sigma)>0$ is small.
 Let  $\sigma$ be close to $\frac{16a_3-8s_0}{|{\delta}_2|^2}$.
 By assumption \eqref{Gsecondlam1}, there exists $s_0$ with $2<s_0<2a_3$ such that
 \begin{equation*}
   -\frac{a_4\bar{\delta}_1}{4}-\frac{s_0}{8}a_4^2+\frac{s_0}{2}\lambda-\frac{\bar{a}_2}{2}-\frac{1}{\sigma}>0.
 \end{equation*}
 By Cauchy Schwartz inequality,
 we have
 \begin{equation}\label{equ1}
    4[\frac{s_0-2}{2} \frac{m(m+1)}{r^2}+\frac{\bar{\delta}^2}{2}\frac{m}{r^2} -\frac{a_4\bar{\delta}_1}{4}-\frac{s_0}{8}a_4^2+\frac{s_0}{2}\lambda-\frac{\bar{a}_2}{2}-\frac{1}{\sigma}][2m +\frac{s_0}{2}]>[-t+\bar{a}_1+\frac{1}{2}\bar{\delta}_1+2\bar{\delta}\frac{m}{r}]^2
 \end{equation}
 for large $m$ and large $r$.
 Putting all the estimates together, we obtain
 \begin{equation*}
   \frac{\partial F(m,r,t,s_0)}{\partial r}>0
 \end{equation*}
 for large $m$ and $r$.

\end{proof}
\begin{theorem}\label{Thmpositivezerosecond}
Under the assumptions of Theorem \ref{Mainthm2}, we have
\begin{equation*}
\frac{\partial F(0,r,0,s)}{\partial r}>0.
\end{equation*}
for large $r$.
\end{theorem}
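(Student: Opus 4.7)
The plan is to specialize the general derivative formula from the beginning of Section~5 to $m=0$, $t=0$, and to choose $s<\mu$ sufficiently close to $\mu$. Setting $m=t=0$ in that formula (equivalently, the display ending at \eqref{Gsecondnonzerom}) annihilates the last line and yields
\begin{align*}
\frac{\partial F(0,r,0,s)}{\partial r}
&= r^{s-1}\!\!\int_{S_r}\!\!\Bigl[\bigl(r\nabla dr-\tfrac{s}{2}\hat g\bigr)(\nabla v,\nabla v)+\tfrac{1}{2} v\langle\bar\delta_2,\nabla_\omega v\rangle\Bigr]e^{-2\rho}dx\\
&\quad+r^{s-1}\!\!\int_{S_r}\!\tfrac{s}{2}\Bigl|\tfrac{\partial v}{\partial r}\Bigr|^2 e^{-2\rho}dx
+r^{s-1}\!\!\int_{S_r}\!\bigl[\bar a_1+\tfrac{1}{2}\bar\delta_1+o(1)\bigr]\tfrac{\partial v}{\partial r}\,v\,e^{-2\rho}dx\\
&\quad+r^{s-1}\!\!\int_{S_r}\!\bigl[-\tfrac{a_4\bar\delta_1}{4}-\tfrac{s a_4^2}{8}+\tfrac{s\lambda}{2}-\tfrac{\bar a_2}{2}+o(1)\bigr]v^2 e^{-2\rho}dx,
\end{align*}
which is a pointwise quadratic form in the three nonnegative quantities $|\nabla_\omega v|,\bigl|\tfrac{\partial v}{\partial r}\bigr|,|v|$. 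It suffices to show this form is positive definite for large $r$.

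Next I would deploy three ingredients in order. From $\liminf r\nabla dr\ge a_3\hat g$ and $s<\mu<2a_3$, the Hessian term contributes at least $(a_3-\tfrac{s}{2}+o(1))|\nabla_\omega v|^2$. Apply Cauchy--Schwarz to the two cross terms: for the tangential cross term, with any $\alpha<a_3-\tfrac{s}{2}$,
\begin{equation*}
\Bigl|\tfrac{1}{2} v\langle\bar\delta_2,\nabla_\omega v\rangle\Bigr|\le \alpha|\nabla_\omega v|^2+\tfrac{|\bar\delta_2|^2}{16\alpha}v^2;
\end{equation*}
for the radial cross term, with any $\beta<\tfrac{s}{2}$,
\begin{equation*}
\Bigl|(\bar a_1+\tfrac{1}{2}\bar\delta_1)\tfrac{\partial v}{\partial r}v\Bigr|\le \beta\Bigl|\tfrac{\partial v}{\partial r}\Bigr|^2+\tfrac{(2\bar a_1+\bar\delta_1)^2}{16\beta}v^2.
\end{equation*}
After absorbing, the required pointwise inequality reduces to positivity of
\begin{equation*}
-\tfrac{a_4\bar\delta_1}{4}-\tfrac{s a_4^2}{8}+\tfrac{s\lambda}{2}-\tfrac{\bar a_2}{2}-\tfrac{|\bar\delta_2|^2}{16\alpha}-\tfrac{(2\bar a_1+\bar\delta_1)^2}{16\beta}+o(1).
\end{equation*}

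Letting $\alpha\uparrow a_3-\tfrac{s}{2}$, $\beta\uparrow\tfrac{s}{2}$, and $s\uparrow\mu$, and using $|\bar\delta_1|\le\delta_1$, $|\bar\delta_2|\le\delta_2$, $|\bar a_1|\le a_1$, $|\bar a_2|\le a_2$, this positivity condition becomes exactly
\begin{equation*}
\lambda>\tfrac{a_4^2}{4}+\tfrac{1}{\mu}\Bigl[a_2+\tfrac{(2a_1+\delta_1)^2}{4\mu}+\tfrac{\delta_2^2}{8a_3-4\mu}+\tfrac{a_4\delta_1}{2}\Bigr],
\end{equation*}
which is precisely hypothesis \eqref{Gsecondlam2}. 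Hence $\partial F(0,r,0,s)/\partial r>0$ for all sufficiently large~$r$, once $s$ is fixed slightly below $\mu$ and the $o(1)$ errors are small.

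The main obstacle is that there is only one tangential reserve, $(a_3-\tfrac{s}{2})|\nabla_\omega v|^2$, coming from the Hessian comparison, and it must absorb the cross term $\tfrac12 v\langle\bar\delta_2,\nabla_\omega v\rangle$ while still leaving the $v^2$ coefficient strictly positive. This is what forces the strict inequality $2a_3>\mu$ and is responsible for the $\tfrac{\delta_2^2}{8a_3-4\mu}$ term in \eqref{Gsecondlam2}. The remaining bookkeeping is a straightforward two-parameter Cauchy--Schwarz optimization and closely parallels the $m=0$ analysis in the proof of Theorem~\ref{Thmpositivezero}.
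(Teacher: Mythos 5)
Your proposal is correct and follows essentially the same route as the paper: specialize the derivative formula of Section~5 to $m=t=0$, absorb the tangential cross term $\tfrac12 v\langle\bar\delta_2,\nabla_\omega v\rangle$ into the Hessian reserve $(a_3-\tfrac{s}{2})|\nabla_\omega v|^2$ via Cauchy--Schwarz (the paper's parameter $\sigma$ is your $\alpha$ in disguise), and then check positive definiteness of the remaining form in $\bigl(\tfrac{\partial v}{\partial r},v\bigr)$, which after letting $s\uparrow\mu$ reduces exactly to \eqref{Gsecondlam2}. Your two-parameter $(\alpha,\beta)$ bookkeeping is equivalent to the paper's discriminant condition $4AC>B^2$, so there is nothing to add.
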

\begin{proof}
Let $m,t=0$ in \eqref{Gsecondnonzerom}, one has
\begin{eqnarray*}
  \frac{\partial F(0,r,0,s)}{\partial r}
   &=& r^{s-1}\int_{S_r}[ (r(\nabla dr) -\frac{s}{2 }\hat{g})(\nabla v,\nabla v)+\frac{1}{2}r^{s-1} v <\bar{\delta}_2 ,\nabla _{\omega}v>]e^{-2\rho}dx\\
   &&+r^{s-1}\int_{S_r}[\frac{s}{2}]|\frac{\partial v}{\partial r}|^2e^{-2\rho}dx\\
   &&+r^{s-1}\int_{S_r}[ \bar{a}_1+\frac{1}{2}\bar{\delta}_1+o(1)]\frac{\partial v}{\partial r}v e^{-2\rho}dx\\
   && +r^{s-1}\int_{S_r}[  -\frac{a_4\bar{\delta}_1}{4}-\frac{s}{8}a_4^2+\frac{s}{2}\lambda-\frac{\bar{a}_2}{2}+o(1)]v^2e^{-2\rho}dx
\end{eqnarray*}
By the proof of \eqref{equ1},  it suffices to show that
 \begin{equation*}
    4[-\frac{a_4\bar{\delta}_1}{4}-\frac{s}{8}a_4^2+\frac{s}{2}\lambda-\frac{\bar{a}_2}{2}-\frac{1}{\sigma}][\frac{s}{2}]>[\bar{a}_1+\frac{1}{2}\bar{\delta}_1]^2.
 \end{equation*}
 This holds by assumption \eqref{Gsecondlam2}.
\end{proof}

{\textbf{Proof of Theorem \ref{Mainthm2}}}
\begin{proof}
The proof follows from the proof of Theorem \ref{Mainthm1}.
We only need to replace  Theorems \ref{Thmpositive} and  \ref{Thmpositivezero}, with Theorems \ref{Thmpositivesecond} and \ref{Thmpositivezerosecond}.
\end{proof}
\section{Proof of Theorems   \ref{Mainthm3}, \ref{goodbound} and Corollary \ref{Cor4}}
Let $q_1=0$, $q_2=-\frac{a_4^2}{4}-\frac{a_4\bar{\delta}}{2r}-\frac{a_4a_5}{2r}-V_2$ and the others as before.  By \eqref{Gpartial}, we have
\begin{eqnarray}
  \label{Gpartialcase1third} \frac{\partial F(m,r,t,s)}{\partial r} 
   &=& r^{s-1}\int_{S_r}[ r(\nabla dr) -(\frac{s}{2 }+\frac{1}{2}\bar{\delta})\hat{g})(\nabla v_m,\nabla v_m)]e^{-2\rho}dx\\
 \label{Gpartialcase2third}  &&+r^{s-1}\int_{S_r}[2m -\frac{\bar{\delta}}{2}+\frac{s}{2}]|\frac{\partial v_m}{\partial r}|^2e^{-2\rho}dx\\
  \label{Gpartialcase3third} &&+r^{s-1}\int_{S_r}[\bar{a}_1-t+\bar{\delta}\frac{m} {r}+o(1)]\frac{\partial v_m}{\partial r}v_me^{-2\rho}dx\\
  \label{Gpartialcase4third} && +r^{s-1}\int_{S_r}[(\lambda-\frac{a_4^2}{4})(\frac{s}{2}+\frac{\bar{\delta}}{2})-\frac{\bar{a}_2}{2}-\frac{1}{4}a_4\bar{\delta}_1+o(1)]v_m^2e^{-2\rho}dx\\
   \label{Gpartialcase5third}&&+r^{s-1}\int_{S_r} \frac{m(m+1)}{r^2}[ \frac{s-2}{2}+\frac{\bar{\delta}}{2}]v_m^2 e^{-2\rho}dx.
\end{eqnarray}

\begin{theorem}\label{Thmpositivethird}
Under the conditions of Theorem \ref{Mainthm3},
 there exist $s_0,R_0,m_0>0$ such that for $m\geq m_0$ and $r\geq R_0$,
 \begin{equation*}
    \frac{\partial F(m,r,t,s_0)}{\partial r} >0.
 \end{equation*}
\end{theorem}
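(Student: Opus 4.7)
\textbf{Proof proposal for Theorem \ref{Thmpositivethird}.} The strategy is parallel to the proof of Theorem \ref{Thmpositive}: I aim to choose a single parameter $s_0$ close to $2a_3-\delta$ so that the curvature term \eqref{Gpartialcase1third} is nonnegative, the diagonal quadratic terms \eqref{Gpartialcase2third}, \eqref{Gpartialcase4third}, \eqref{Gpartialcase5third} are strictly positive (with coefficients large enough in $m$), and a Cauchy--Schwarz estimate absorbs the indefinite cross term \eqref{Gpartialcase3third}. The only differences from Theorem \ref{Thmpositive} are the extra summand $-\tfrac{1}{4}a_4\bar{\delta}_1$ in the $v_m^2$ coefficient and the absence of the $\tfrac{\bar{\delta}a_4}{2}$ contribution in the cross term, so the same four-step scheme will carry over with different constants.

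First, by \eqref{Gconsthird} we have $a_3>1+\delta$, so $2a_3-\delta>2+\delta>2$; I pick $s_0$ with $2<s_0<2a_3-\delta$ and sufficiently close to $2a_3-\delta$. Since $\liminf_{r\to\infty}[r\nabla dr-a_3\hat{g}]\geq 0$ and $|\bar{\delta}(r)|\leq \delta+o(1)$, the tensor $r(\nabla dr)-(\tfrac{s_0}{2}+\tfrac{\bar{\delta}}{2})\hat{g}$ is nonnegative for large $r$, yielding $\eqref{Gpartialcase1third}\geq 0$. For the same range of $s_0$, the scalar $\tfrac{s_0-2}{2}+\tfrac{\bar{\delta}}{2}>0$ eventually, so $\eqref{Gpartialcase5third}\geq 0$, and $2m-\tfrac{\bar{\delta}}{2}+\tfrac{s_0}{2}>0$ for large $m,r$ makes $\eqref{Gpartialcase2third}\geq 0$.

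Next, the $v_m^2$ coefficient in \eqref{Gpartialcase4third} is asymptotically at least $(\lambda-\tfrac{a_4^2}{4})(a_3-\delta)-\tfrac{a_2}{2}-\tfrac{a_4\delta_1}{4}$ as $s_0\to 2a_3-\delta$ and $r\to\infty$, which is strictly positive by the second alternative in \eqref{Gconlthird}. Combining the positive terms from \eqref{Gpartialcase4third} and \eqref{Gpartialcase5third} with \eqref{Gpartialcase2third}, the Cauchy--Schwarz inequality I need is
\begin{equation*}
4\Bigl[\tfrac{m(m+1)}{r^2}\bigl(\tfrac{s_0-2}{2}+\tfrac{\bar{\delta}}{2}\bigr)+(\lambda-\tfrac{a_4^2}{4})\bigl(\tfrac{s_0}{2}+\tfrac{\bar{\delta}}{2}\bigr)-\tfrac{\bar{a}_2}{2}-\tfrac{a_4\bar{\delta}_1}{4}\Bigr]\Bigl[2m-\tfrac{\bar{\delta}}{2}+\tfrac{s_0}{2}\Bigr]>\Bigl[\bar{a}_1-t+\bar{\delta}\tfrac{m}{r}+o(1)\Bigr]^2.
\end{equation*}
For large $m$ the left side grows like $\,8m^3(s_0-2+\bar{\delta})/r^2\,$ while the right side grows only like $\bar{\delta}^2 m^2/r^2$, so the inequality holds for sufficiently large $m$ and $r$; this shows $|\eqref{Gpartialcase3third}|<\eqref{Gpartialcase2third}+\eqref{Gpartialcase4third}+\eqref{Gpartialcase5third}$ pointwise on $S_r$, hence $\partial_r F(m,r,t,s_0)>0$.

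The main technical obstacle is the last step: I must verify that the specific Cauchy--Schwarz partition used for the $v_m^2$ and $|\partial_r v_m|^2$ coefficients is slack enough to accommodate the contribution of $\bar{\delta}\,m/r$ to the cross term while \emph{also} respecting the bound coming from \eqref{Gconlthird}. The delicate point is that \eqref{Gconlthird} only controls the non-$m$ part of the splitting, so the $m$-dependent portion of the Cauchy--Schwarz must be handled separately using the $\tfrac{m(m+1)}{r^2}$ contribution from \eqref{Gpartialcase5third}. Once this splitting is made precise, the rest is a routine asymptotic verification using $|\bar{\delta}|\leq\delta$, $|\bar{\delta}_1|\leq\delta_1$, $|\bar{a}_i|\leq a_i$, and the sign conditions enforced in Step~1.
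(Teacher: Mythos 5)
Your proposal is correct and follows essentially the same route as the paper: the same choice of $s_0$ close to (and below) $2a_3-\delta$, the same sign analysis of \eqref{Gpartialcase1third}, \eqref{Gpartialcase2third}, \eqref{Gpartialcase4third}, \eqref{Gpartialcase5third} via \eqref{Gconsthird} and the second alternative of \eqref{Gconlthird}, and the same Cauchy--Schwarz absorption of the cross term \eqref{Gpartialcase3third}. Your explicit remark that the $m$-dependent part $\bar{\delta}\,m/r$ of the cross term must be dominated by the $\tfrac{m(m+1)}{r^2}$ contribution from \eqref{Gpartialcase5third} (cubic in $m$ beating quadratic in $m$) is exactly the point the paper leaves implicit in its ``for large $m$ and $r$''.
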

\begin{proof}
Let  $s_0$ be such that  $s_0<2a_3-\delta$ and sufficiently close to $2a_3-\delta $.
By the assumption $a_3>1+\delta$ (see \eqref{Gconsthird}), one has
\begin{equation*}
     r(\nabla dr) -(\frac{s_0}{2 }+\frac{1}{2}\bar{\delta})\hat{g} \geq 0,
\end{equation*}
for large $r$, which implies
\begin{equation}\label{Gpartial2positivethird}
    \eqref{Gpartialcase1third}>0.
\end{equation}
By assumption $a_3>1+\delta$ (see \eqref{Gconsthird}) and $\lambda>\frac{a_4^2}{4}+\frac{2a_2+a_4\delta_1}{4(a_3-\delta)}$ (see \eqref{Gconlthird}), one has
\begin{equation*}
    (\lambda-\frac{a_4^2}{4})(\frac{s_0}{2}+\frac{\bar{\delta}}{2})-\frac{\bar{a}_2}{2}-\frac{a_4}{4}\bar{\delta}_1>0,
\end{equation*}
and
\begin{equation*}
\frac{s_0-2}{2}+\frac{\bar{\delta}}{2}>0.
\end{equation*}

By Cauchy-Schwartz inequality again, one has
\begin{equation*}
     [\frac{m(m+1)}{r^2} (\frac{s_0-2}{2}+\frac{\bar{\delta}}{2})+(\lambda-\frac{a_4^2}{4})(\frac{s_0}{2}+\frac{\bar{\delta}}{2})-\frac{\bar{a}_2}{2}-\frac{a_4}{4}\bar{\delta}_1]v_m^2
     +[2m -\frac{\bar{\delta}}{2}+\frac{s_0}{2}]|\frac{\partial v_m}{\partial r}|^2>|[\frac{\bar{a}_1}{2}-t+\bar{\delta}\frac{m} {r}]\frac{\partial v_m}{\partial r}v_m|
\end{equation*}
 for large $m$ and $r$. Thus, one has
 \begin{equation}\label{Gpartial5positivethird}
    |\eqref{Gpartialcase3third}|<\eqref{Gpartialcase2third}+\eqref{Gpartialcase4third}+\eqref{Gpartialcase5third}
 \end{equation}
for large $m$ and $r$.

Now   Theorem \ref{Thmpositivethird} follows from \eqref{Gpartial2positivethird} and \eqref{Gpartial5positivethird}.
\end{proof}
\begin{theorem}\label{Thmpositivezerothird}
Let  $s<\mu$ and $s$ be sufficiently close to $\mu$. Then
under the conditions of Theorem \ref{Mainthm3},
 we have
 \begin{equation*}
    \frac{\partial F(0,r,0,s)}{\partial r} >0,
 \end{equation*}
 for large $r$.
\end{theorem}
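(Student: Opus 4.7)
The plan is to mimic the proof of Theorem \ref{Thmpositivezero}, adapted to the slightly different derivative formula \eqref{Gpartialcase1third}--\eqref{Gpartialcase5third} produced by the choices $q_1 = 0$ and $q_2 = -\frac{a_4^2}{4} - \frac{a_4\bar\delta}{2r} - \frac{a_4 a_5}{2r} - V_2$ used in this section. First I substitute $m = 0$ and $t = 0$ into \eqref{Gpartialcase1third}--\eqref{Gpartialcase5third}; the $m(m+1)$-term \eqref{Gpartialcase5third} vanishes identically, and four contributions remain.

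Second, I verify that the geometric integrand in \eqref{Gpartialcase1third} is nonnegative for all sufficiently large $r$, and therefore can be dropped from the lower bound. Choosing $s < \mu$ close to $\mu$, condition \eqref{Gconsthird} gives $\frac{s}{2} + \frac{1}{2}\bar\delta < a_3$ for large $r$ since $2a_3 > \mu + \delta$; the hypothesis $\liminf_{r \to \infty}[r \nabla dr - a_3 \hat g] \geq 0$ then forces $r \nabla dr - (\frac{s}{2} + \frac{1}{2}\bar\delta)\hat g \geq 0$ asymptotically, as in the analogous step of Theorem \ref{Thmpositivethird}.

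Third, I apply the Cauchy--Schwarz inequality to the cross term \eqref{Gpartialcase3third} and control it by the sum of \eqref{Gpartialcase2third} and \eqref{Gpartialcase4third}. The required pointwise inequality is
\begin{equation*}
\bar a_1^2 < 4 \Bigl[\bigl(\lambda - \tfrac{a_4^2}{4}\bigr)\bigl(\tfrac{s}{2} + \tfrac{\bar\delta}{2}\bigr) - \tfrac{\bar a_2}{2} - \tfrac{a_4 \bar\delta_1}{4}\Bigr]\Bigl[\tfrac{s}{2} - \tfrac{\bar\delta}{2}\Bigr],
\end{equation*}
which, after clearing common factors, is equivalent to
\begin{equation*}
\lambda > \frac{a_4^2}{4} + \frac{\bar a_2 + \tfrac{1}{2} a_4 \bar\delta_1}{s + \bar\delta} + \frac{\bar a_1^2}{s^2 - \bar\delta^2}.
\end{equation*}
Replacing the barred quantities by the limsup bounds $|\bar a_1| \leq a_1$, $|\bar a_2| \leq a_2$, $|\bar\delta_1| \leq \delta_1$, $|\bar\delta| \leq \delta$, and then letting $s \to \mu^-$, the sufficient condition reduces to exactly the first bound of \eqref{Gconlthird}, which is assumed to hold strictly.

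The one thing requiring care is the bookkeeping needed to absorb the various $o(1)$ error terms picked up in passing from \eqref{Gpartialcase1third}--\eqref{Gpartialcase5third} and from the passage to limsups. This is routine but not quite automatic: it is precisely the strictness of \eqref{Gconlthird}, combined with the freedom to push $s$ arbitrarily close to $\mu$, that provides the slack needed. The argument then proceeds identically to the closing step of Theorem \ref{Thmpositivezero}, giving $\frac{\partial F(0,r,0,s)}{\partial r} > 0$ for all $r$ beyond a threshold depending on the gap in \eqref{Gconlthird}.
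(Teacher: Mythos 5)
Your proposal is correct and follows essentially the same route as the paper: set $m=t=0$ in \eqref{Gpartialcase1third}--\eqref{Gpartialcase5third}, discard the nonnegative geometric term using $2a_3>\mu+\delta$, and reduce positivity via Cauchy--Schwarz to the inequality \eqref{Gsolveeqthird}, which is exactly the first bound in \eqref{Gconlthird} after passing to the limsup constants and letting $s\to\mu^-$. Your write-up is in fact slightly more explicit than the paper's (which compresses the last step into a one-line appeal to the hypothesis), but there is no substantive difference.
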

\begin{proof}
Let $m=0, t=0$ in \eqref{Gpartialcase1third}, one has
\begin{eqnarray*}
   \frac{\partial F(0,r,0,s)}{\partial r}
   &=&\int_{S_r} r^{s-1}[ r(\nabla dr) -(\frac{s}{2 }+\frac{1}{2}\bar{\delta})\hat{g})(\nabla v,\nabla v)]e^{-2\rho}dx\\
   &&+r^{s-1}\int_{S_r}[ -\frac{\bar{\delta}}{2}+\frac{s}{2}]|\frac{\partial v}{\partial r}|^2e^{-2\rho}dx\\
   &&+r^{s-1}\int_{S_r}[\bar{a}_1+o(1)]\frac{\partial v}{\partial r}v e^{-2\rho}dx\\
   && +r^{s-1}\int_{S_r}[(\lambda-\frac{a_4^2}{4})(\frac{s}{2}+\frac{\bar{\delta}}{2})-\frac{\bar{a}_2}{2}-\frac{a_4}{4}\bar{\delta}_1+o(1)]v^2
\end{eqnarray*}
We will show  that for large $r$,
\begin{equation*}
    \frac{\partial F(0,r,0,s)}{\partial r}>0.
\end{equation*}
By Cauchy Schwartz inequality, it suffices to prove
\begin{equation}\label{Gsolveeqthird}
    4[(\lambda-\frac{a_4^2}{4})(\frac{s}{2}+\frac{\bar{\delta}}{2})-\frac{\bar{a}_2}{2}-\frac{a_4}{4}\bar{\delta}_1][-\frac{\bar{\delta}}{2}+\frac{s}{2}]>
    |\bar{a}_1|^2,
\end{equation}
which holds by  assumption \eqref{Gconl} and $s$ is close to $\mu$.
\end{proof}
{\textbf{Proof of Theorem \ref{Mainthm2}}}
\begin{proof}
The proof follows from the proof of Theorems \ref{Mainthm1} and \ref{Mainthm2}.
We only need to replace  Theorems \ref{Thmpositive} and  \ref{Thmpositivezero}, with Theorems \ref{Thmpositivethird} and \ref{Thmpositivezerothird}.
\end{proof}

{\textbf{Proof of Corollary \ref{Cor4}}}
\begin{proof}
The proof follows from Theorem \ref{Mainthm2} and Lemma \ref{Lekey3}.
\end{proof}

{\textbf{Proof of Theorem \ref{goodbound}}}

In the proof Theorems \ref{Mainthm1} and  \ref{Mainthm3}, we let  $q_2=-\frac{a_4^2}{4}-\frac{a_4a_5}{2r}-V_2$    and
$q_2=-\frac{a_4^2}{4}-\frac{a_4a_5}{2r}-V_2-\frac{a_4\bar{\delta}}{2r}$ respectively.
Now we only need to let $q_2=-\frac{a_4^2}{4}-\frac{a_4a_5}{2r}-V_2-(1-\sigma)\frac{a_4\bar{\delta}}{2r}$, and following the proof of
Theorems \ref{Mainthm1} and \ref{Mainthm3}, we can prove Theorem \ref{goodbound}. We omit the details.

 \section*{Acknowledgments}

I would like to thank Svetlana Jitomirskaya for introducing to me paper \cite{kumura2010radial} and
inspiring discussions on this subject. I also want to thank Songying Li for telling me details about the unique continuation theorem.
  The author  was supported by the AMS-Simons Travel Grant 2016-2018 and  NSF DMS-1700314. This research was also
partially supported by NSF DMS-1401204.


\footnotesize


\begin{thebibliography}{10}

\bibitem{ag}
S.~Agmon.
\newblock Lower bounds for solutions of {S}chr\"odinger equations.
\newblock {\em J. Analyse Math.}, 23:1--25, 1970.

\bibitem{agmon1975}
S.~Agmon.
\newblock Spectral properties of {S}chr\"odinger operators and scattering
  theory.
\newblock {\em Ann. Scuola Norm. Sup. Pisa Cl. Sci. (4)}, 2(2):151--218, 1975.

\bibitem{MR2307748}
S.~A. Denisov and A.~Kiselev.
\newblock Spectral properties of {S}chr\"odinger operators with decaying
  potentials.
\newblock In {\em Spectral theory and mathematical physics: a {F}estschrift in
  honor of {B}arry {S}imon's 60th birthday}, volume~76 of {\em Proc. Sympos.
  Pure Math.}, pages 565--589. Amer. Math. Soc., Providence, RI, 2007.

\bibitem{donnelly1990negative}
H.~Donnelly.
\newblock Negative curvature and embedded eigenvalues.
\newblock {\em Math. Z.}, 203(2):301--308, 1990.

\bibitem{donnelly1999}
H.~Donnelly.
\newblock Spectrum of the {L}aplacian on asymptotically {E}uclidean spaces.
\newblock {\em Michigan Math. J.}, 46(1):101--111, 1999.

\bibitem{donnelly2010spectral}
H.~Donnelly.
\newblock Spectral theory of complete {R}iemannian manifolds.
\newblock {\em Pure Appl. Math. Q.}, 6(2, Special Issue: In honor of Michael
  Atiyah and Isadore Singer):439--456, 2010.

\bibitem{donnelly1992}
H.~Donnelly and N.~Garofalo.
\newblock Riemannian manifolds whose {L}aplacians have purely continuous
  spectrum.
\newblock {\em Math. Ann.}, 293(1):143--161, 1992.

\bibitem{donnelly1979pure}
H.~Donnelly and P.~Li.
\newblock Pure point spectrum and negative curvature for noncompact manifolds.
\newblock {\em Duke Math. J.}, 46(3):497--503, 1979.

\bibitem{escobar1992spectrum}
J.~F. Escobar and A.~Freire.
\newblock The spectrum of the {L}aplacian of manifolds of positive curvature.
\newblock {\em Duke Math. J.}, 65(1):1--21, 1992.

\bibitem{greene2006function}
R.~E. Greene and H.~Wu.
\newblock {\em Function theory on manifolds which possess a pole}, volume 699
  of {\em Lecture Notes in Mathematics}.
\newblock Springer, Berlin, 1979.

\bibitem{jl1}
S.~Jitomirskaya and W.~Liu.
\newblock Noncompact complete {R}iemannian manifolds with dense eigenvalues
  embedded in the essential spectrum of the {L}aplacian.
\newblock {\em https://arxiv.org/pdf/1805.01072.pdf}.

\bibitem{jl2}
S.~Jitomirskaya and W.~Liu.
\newblock Noncompact complete {R}iemannian manifolds with singular continuous
  spectrum embedded into the essential spectrum of the {L}aplacian.
\newblock {\em Preprint}.

\bibitem{kato}
T.~Kato.
\newblock Growth properties of solutions of the reduced wave equation with a
  variable coefficient.
\newblock {\em Comm. Pure Appl. Math.}, 12:403--425, 1959.

\bibitem{kiselev2005imbedded}
A.~Kiselev.
\newblock Imbedded singular continuous spectrum for {S}chr\"odinger operators.
\newblock {\em J. Amer. Math. Soc.}, 18(3):571--603, 2005.

\bibitem{sim2}
A.~Kiselev, C.~Remling, and B.~Simon.
\newblock Effective perturbation methods for one-dimensional {S}chr\"odinger
  operators.
\newblock {\em J. Differential Equations}, 151(2):290--312, 1999.

\bibitem{kumura1997essential}
H.~Kumura.
\newblock On the essential spectrum of the {L}aplacian on complete manifolds.
\newblock {\em J. Math. Soc. Japan}, 49(1):1--14, 1997.

\bibitem{kumura2010radial}
H.~Kumura.
\newblock The radial curvature of an end that makes eigenvalues vanish in the
  essential spectrum. {I}.
\newblock {\em Math. Ann.}, 346(4):795--828, 2010.

\bibitem{MR2951504}
H.~Kumura.
\newblock The lower bound of the {R}icci curvature that yields an infinite
  discrete spectrum of the {L}aplacian.
\newblock {\em Ann. Inst. Fourier (Grenoble)}, 61(4):1557--1572 (2012), 2011.

\bibitem{kumuraflat}
H.~Kumura.
\newblock The radial curvature of an end that makes eigenvalues vanish in the
  essential spectrum {II}.
\newblock {\em Bull. Lond. Math. Soc.}, 43(5):985--1003, 2011.

\bibitem{MR2914877}
H.~Kumura.
\newblock Perturbation of a warped product metric of an end and the growth
  property of solutions to eigenvalue equations.
\newblock {\em Kyoto J. Math.}, 52(2):249--276, 2012.

\bibitem{MR3100776}
H.~Kumura.
\newblock Limiting absorption principle on manifolds having ends with various
  measure growth rate limits.
\newblock {\em Proc. Lond. Math. Soc. (3)}, 107(3):517--548, 2013.

\bibitem{LiuII}
W.~Liu.
\newblock Growth of the eigensolutions of {L}aplacians on {R}iemannian
  manifolds {II}: positivity of the initial energy.
\newblock {\em https://arxiv.org/pdf/1709.02916.pdf}, 2017.

\bibitem{nab}
S.~N. Naboko.
\newblock On the dense point spectrum of {S}chr\"odinger and {D}irac operators.
\newblock {\em Teoret. Mat. Fiz.}, 68(1):18--28, 1986.

\bibitem{pinsky1979spectrum}
M.~A. Pinsky.
\newblock Spectrum of the {L}aplacian on a manifold of negative curvature.
  {II}.
\newblock {\em J. Differential Geom.}, 14(4):609--620, 1979.

\bibitem{sim3}
M.~Reed and B.~Simon.
\newblock {\em Methods of modern mathematical physics. {III}}.
\newblock Academic Press [Harcourt Brace Jovanovich, Publishers], New
  York-London, 1979.
\newblock Scattering theory.

\bibitem{tao}
I.~Rodnianski and T.~Tao.
\newblock Effective limiting absorption principles, and applications.
\newblock {\em Comm. Math. Phys.}, 333(1):1--95, 2015.

\bibitem{schlag2007dispersive}
W.~Schlag.
\newblock Dispersive estimates for {S}chr\"odinger operators: a survey.
\newblock In {\em Mathematical aspects of nonlinear dispersive equations},
  volume 163 of {\em Ann. of Math. Stud.}, pages 255--285. Princeton Univ.
  Press, Princeton, NJ, 2007.

\bibitem{sim4}
B.~Simon.
\newblock On positive eigenvalues of one-body {S}chr\"odinger operators.
\newblock {\em Comm. Pure Appl. Math.}, 22:531--538, 1969.

\bibitem{sim1}
B.~Simon.
\newblock Some {S}chr\"odinger operators with dense point spectrum.
\newblock {\em Proc. Amer. Math. Soc.}, 125(1):203--208, 1997.
\bibitem{simon2017tosio}
B.~Simon.
\newblock {T}osio {K}ato's {W}ork on {N}on--{R}elativistic {Q}uantum
  {M}echanics.
\newblock {\em arXiv preprint arXiv:1711.00528}, 2017.

\bibitem{va}
A.~F. Vakulenko.
\newblock A variant of commutator estimates in spectral theory.
\newblock {\em Zap. Nauchn. Sem. Leningrad. Otdel. Mat. Inst. Steklov. (LOMI)},
  163(Kraev. Zadachi Mat. Fiz. i Smezhn. Vopr. Teor. Funktsi\u\i \ 19):29--36,
  186, 1987.


\bibitem{von1929uber}
J.~von Neuman and E.~Wigner.
\newblock Uber merkw{\"u}rdige diskrete {E}igenwerte. {U}ber das {V}erhalten
  von {E}igenwerten bei adiabatischen {P}rozessen.
\newblock {\em Zhurnal Physik}, 30:467--470, 1929.


\end{thebibliography}
\end{document}